\DeclareRobustCommand\onedot{\futurelet\@let@token\@onedot}
\def\@onedot{\ifx\@let@token.\else.\null\fi\xspace}
\def\eg{\emph{e.g}\onedot} 
\def\ie{\emph{i.e}\onedot} 
\def\cf{\emph{cf}\onedot}
\def\etal{\emph{et al}\onedot}
\DeclareMathAlphabet\mathbfcal{OMS}{cmsy}{b}{n}
\definecolor{JungleGreen}{RGB}{40, 169, 143}
\definecolor{BatteryChargedBlue}{RGB}{42, 187, 218}
\definecolor{GargoyleGas}{RGB}{250, 213, 66}
\definecolor{LightCarminePink}{RGB}{236, 94, 100}
\definecolor{MaximumPurple}{RGB}{107, 57, 121}
\newcommand{\R}{\mathbb{R}}
\newcommand{\N}{\mathbb{N}}
\newcommand{\sgn}{\mathrm{sgn\,}}
\DeclareMathOperator*{\argmin}{arg\,min}
\renewcommand{\d}{\,\mathrm{d}}
\providecommand{\dist}{{\mathrm{dist}\,}}
\renewcommand{\div}{{\mathrm{div}\,}}
\newcommand{\surface}{\mathcal{S}}
\newcommand{\normal}{n}
\newcommand{\sdf}{\phi}
\newcommand{\B}{\mathcal{B}}
\newcommand{\domain}{\Omega}
\newcommand{\E}{\mathcal{E}}
\newcommand{\mmheat}{{\E^\phi_{\mathrm{MM}}}}
\newcommand{\mmloss}{{\E_{\mathrm{MM}}}}
\newcommand{\sdfloss}{{\E_{\mathrm{SDF}}}}
\newcommand{\fitloss}{{\E_{\mathrm{fit}}}}
\newcommand{\normalloss}{{\E_{\normal}}}
\newcommand{\orientloss}{{\E_{\B}}}
\newcommand{\mmlossmod}{{\E^{\mathrm{mod}}_{\mathrm{MM}}}}
\newtheorem{theorem}{Theorem}[section]
\newtheorem{lemma}[theorem]{Lemma}
\newtheorem{proposition}[theorem]{Proposition}
\crefname{equation}{}{}
\newcommand\CoAuthorMark{\footnotemark[\arabic{footnote}]}
\begin{document}
\title{SDFs from Unoriented Point Clouds using Neural Variational Heat Distances}

\author[1]{Samuel Weidemaier\footnote{These authors contributed equally to this work. (weidemaier@uni-bonn.de, florine.hartwig@uni-bonn.de)}}	
\author[1]{Florine Hartwig\protect\CoAuthorMark}
\author[2]{Josua Sassen}
\author[3]{Sergio Conti}
\author[4]{Mirela Ben-Chen}
\author[1]{Martin Rumpf}

\affil[1]{Institute for Numerical Simulation, University of Bonn}
\affil[2]{Centre Borelli, ENS Paris-Saclay}
\affil[3]{Institute for Applied Mathematics, University of Bonn}
\affil[4]{Technion - Israel Institute of Technology}

\date{}

\maketitle
\begin{strip}
  \includegraphics[width=\linewidth]{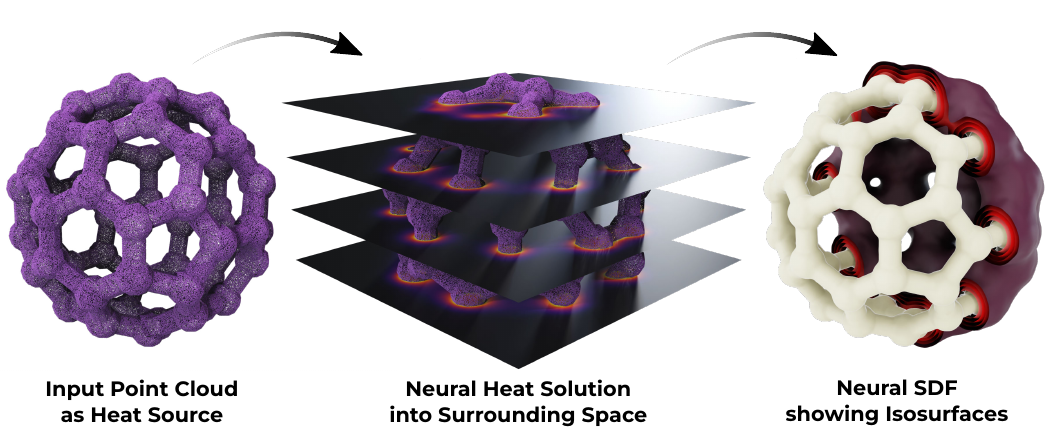}
  \centering
   \captionof{figure}{We compute neural SDFs from unoriented point clouds (left) by first computing a small time step of heat flow (middle) and then using its gradient directions to solve for a neural SDF (right).  }
 \label{fig:teaser}
\end{strip}

\begin{abstract}
	We propose a novel variational approach for computing neural Signed Distance Fields (SDF) from unoriented point clouds. 
	To this end, we replace the commonly used eikonal equation with the heat method, carrying over to the neural domain what has long been standard practice for computing distances on discrete surfaces. 
	This yields two convex optimization problems for whose solution we employ neural networks: 
	We first compute a neural approximation of the gradients of the unsigned distance field through a small time step of heat flow with weighted point cloud densities as initial data. 
	Then we use it to compute a neural approximation of the SDF.  
	We prove that the underlying variational problems are well-posed. 
	Through numerical experiments, we demonstrate that our method provides state-of-the-art surface reconstruction and consistent SDF gradients. 
	Furthermore, we show in a proof-of-concept that it is accurate enough for solving a PDE on the zero-level set.
\end{abstract}

\section{Introduction}
Neural implicit representations have increased in popularity in recent years~\cite{essakine2025where}, due to their flexibility and favorable performance in applications such as shape reconstruction~\cite{yunus2024recent}. While many volumetric functions can represent a given surface, neural \emph{Signed Distance Fields} (SDFs)~\cite{park2019deepsdf} have been favored among this family of functions due to their applicability beyond reconstruction, \eg for constructive solid geometry~\cite{marschner2023constructive} and collision detection~\cite{liu2024real}. Furthermore, SDFs enable the use of \emph{level set methods} for solving PDEs, which have a myriad of applications in the classical setting~\cite{osher2001level,droske2004level}, and have been explored only recently in the neural setting~\cite{mehta2022level}. 

A neural SDF must fulfill a few desired properties to be usable in PDE level set methods. First, the SDF should accurately represent the surface, namely its zero-level set should be well resolved to encode surface detail. In addition, the non-zero level sets should be accurate in a \emph{narrow band} near the surface to enable the accurate computation of differential quantities, such as the surface gradient and the (weak) Laplace-Beltrami operator. Finally, for practical usability, it is advantageous that the SDF can be constructed directly from an \emph{unoriented} point cloud. Existing approaches usually either require an approximated ground truth distance~\cite{novello2022exploring} or learned priors~\cite{chou2022gensdf}. Other approaches either represent the zero-level set well but do not generate an accurate SDF in a narrow band~\cite{wang2023neural}, or generate an SDF with a low resolution surface representation~\cite{coiffier20241}.

Computing distances is a difficult problem, not only in the neural SDF setting. The governing equation is the eikonal equation, namely the gradient of the SDF has unit norm, which is well known to be challenging to solve in many scenarios~\cite{sethian1999fast}. Essentially, the eikonal equation has many solutions, of which only the \emph{viscosity solution}~\cite{Crandall1997} is the SDF. Since the neural net is optimized using gradient descent, it can converge to a non-SDF local minimum, depending on the initialization. These inherent challenges with the eikonal equation may cause difficulties for neural methods that use it in their loss functional~\cite{sitzmann2020implicit,ben2022digs}.
Instead of directly using the eikonal loss, the heat method~\cite{crane2013geodesics} has been proposed for computing geodesic distances on surfaces and has been successfully used in multiple applications~\cite{feng2024heat}. Its success follows directly from the simplicity of the method, requiring the solution of two well-posed \emph{elliptic} PDEs, which are considerably better behaved than the eikonal equation. 

We propose to lift the heat method to the neural setting (see~\Cref{fig:teaser}). To that end, we define two well-posed variational problems, which on the one hand, can be optimized well with stochastic gradient descent using a standard network architecture, and on the other hand, have existence guarantees. As in the heat method, in the first step we solve for a time step of the heat flow and compute using this the gradient of the \emph{unsigned} distance function, exploiting the relation between the heat kernel and the distance. 
Unlike in the heat method, we do not have a discretization of the domain, as we work with neural representations. Thus, to compute the gradients, we use a \emph{variational} formulation for a backward Euler timestep of the heat flow~\cite{thomee2007galerkin} in the spirit of the general class of minimizing movement schemes\cite{park2023deep}, see~\Cref{fig:heat_heat_slices} (center).
\begin{figure}[b]
	\centering
	\includegraphics[height=0.295\linewidth]{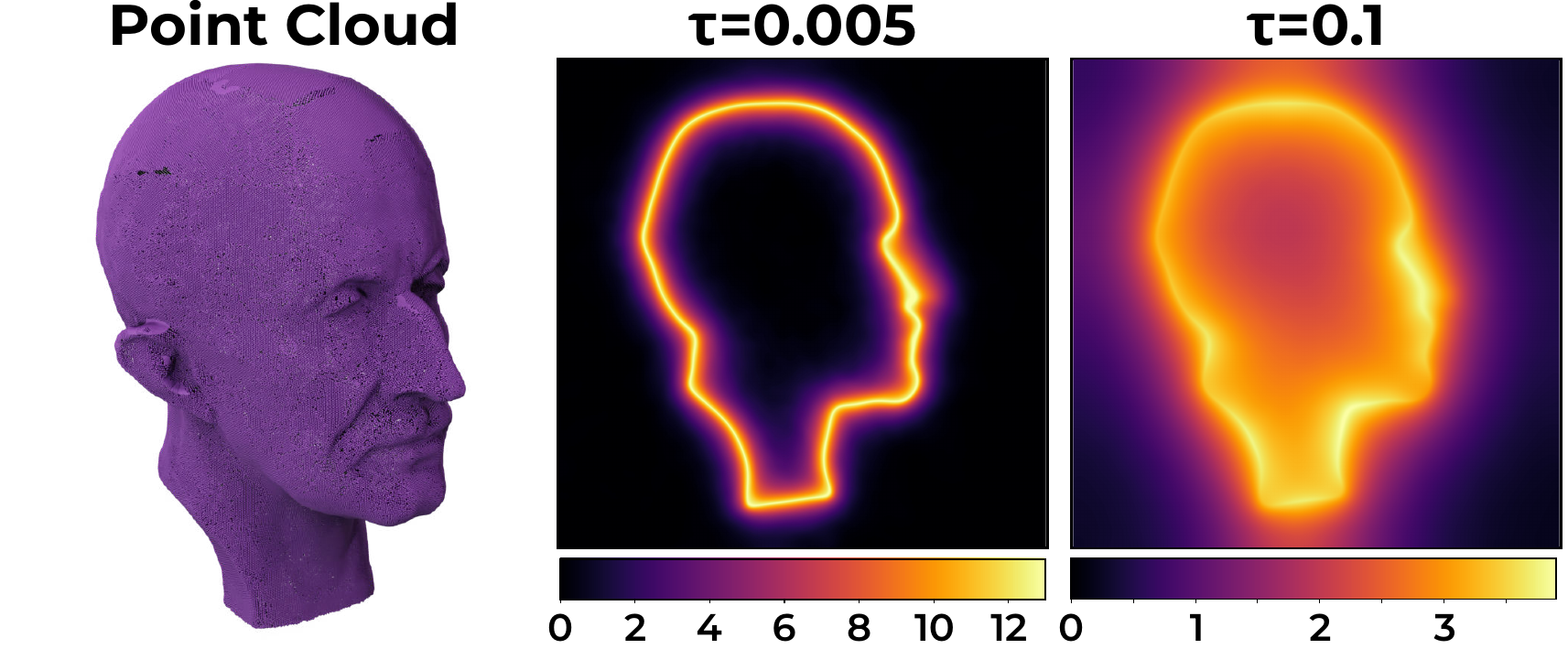}
	\includegraphics[height=0.295\linewidth]{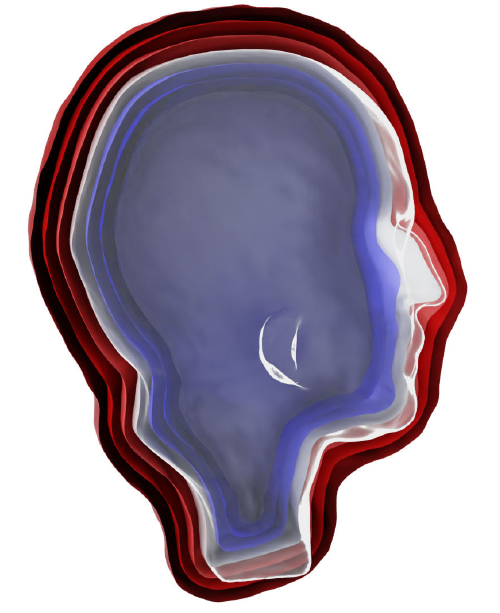}
	\caption{Point cloud input data (left), a sliced color coding of two different time steps of the heat flow (middle),
		equispaced isosurfaces for positive (red) and negative (blue) values of the SDF (right). }
	\label{fig:heat_heat_slices}
\end{figure}
In the second step, we fit the computed gradients to obtain the signed distance, taking into consideration that the input point cloud is \emph{unoriented}, leading to accurate level sets in a narrow band near the surface; see~\Cref{fig:heat_heat_slices} (right).

We show that compared to state-of-the-art methods, our approach, denoted HeatSDF, leads to a good balance between the fidelity of the zero-level set to the surface and the accuracy of the SDF in a narrow band near the surface. Furthermore, we show that our method works well on point clouds
with non-uniform density. In addition, we prove existence of solutions of the variational problems in both steps of our method. Finally, we provide a proof-of-concept for using this approach to solve a PDE directly on the neural field and do geometric queries demonstrating that the generated SDF is sufficiently accurate for these challenging tasks.

\subsection{Related Work}
\paragraph*{Neural SDFs.}
Geometric deep learning is a widely researched field~\cite{bronstein2021geometric}, and within it we focus on representing a \emph{single} surface embedded in $\R^3$, as opposed to learning how to perform a specific task, \eg, segmentation or classification, on a \emph{dataset} of models.
Surfaces are represented using a \emph{neural implicit function}, namely a network that encodes a function $\phi_\surface \colon \R^3 \to \R$, whose zero-level set is the required surface $\surface$. A surface can be represented using many neural implicits~\cite{essakine2025where}, however, the one often sought after is the \emph{Signed Distance Field} (SDF), where the function encoded is the signed distance to the surface. Neural implicits, which are SDFs are useful in geometric applications beyond surface reconstruction, \eg, solving PDEs and geometric flows~\cite{mehta2022level} and performing constructive solid geometry on neural surfaces~\cite{marschner2023constructive}. Especially for the solution of PDEs using level set methods, it is important that the SDF is accurate not only \emph{on} the surface but also in a \emph{narrow band} near the surface. 

A full survey of neural SDFs is beyond our scope, and we refer the reader to a recent review on the topic~\cite{schirmer2024geometric}. Here, we will focus on methods that are unsupervised (i.e., do not use an approximated ground truth SDF), do not learn priors from a dataset, and do not require normal orientation. This is one of the most flexible starting points, where an SDF can be computed given only an unoriented point cloud.

The seminal paper SIREN~\cite{sitzmann2020implicit} uses an eikonal loss, which combined with periodic activation functions, leads to mostly good results. As the results are very initialization dependent, DiGS~\cite{ben2022digs} has proposed a geometric initialization that improves the result for unoriented point clouds, as well as an additional regularization with a divergence-based loss. StEik~\cite{yang2023steik} has discussed the instabilities inherent in the eikonal equation and proposed additional regularizing losses, as well as using a \emph{quadratic} neural network instead of the usual linear one. Concurrently with our work, HotSpot~\cite{wang2025hotspotsigneddistancefunction}, building on the quadratic network of StEik, has proposed adding a regularizing loss that encourages the exponent of the SDF to fulfill the heat equation. This approach, like ours, incorporates the relation between the distance function and the heat equation. However, their formulation is still based on the eikonal equation, making it non-convex and more difficult to optimize, whereas we propose to solve two \emph{convex} functionals successively. Furthermore, the density of the point cloud is not taken into account in their method, leading to problematic regions for non-uniform point cloud sampling.

Additional approaches include the singular-Hessian regularizer~\cite{wang2023neural}, which enforces the Hessian of the SDF to be singular near the surface, and the $1$-Lipschitz approach~\cite{coiffier20241}, which uses a hinge-Kantorovitch-Rubinstein loss and requires oriented normals. The first achieves excellent surface reconstruction error but usually has high eikonal error even in a narrow band around the surface, and the second leads to accurate SDFs in a narrow band, but the resulting surface is overly-regularized, losing geometric detail. Our method, on the other hand, leads to a good balance between these two requirements, such that the reconstructed surface is detailed, and the SDF is accurate enough in a narrow band near the surface to solve a neural surface PDE. Furthermore, in contrast to the $1$-Lipschitz approach, which uses a tailored network architecture, 
our method works with a standard network architecture. 

\paragraph*{Variational Neural PDEs.} Minimizing movements schemes are a robust and effective time discretization for 
gradient flow type evolution problems in metric spaces \cite{de1993new}. In \cite{park2023deep},
deep learning-based minimizing movement schemes for a variety of PDEs of gradient flow type are proposed. 
In \cite{hu2024energetic}, a structure-preserving Eulerian neural network discretization for the numerical solving of 
$L^2$-gradient flows is presented. These schemes are based on the variational principle of the energy-dissipation law, which ensures 
the monotonic decay of the free energy and avoids unphysical states.
We take inspiration from these machine learning approaches to properly capture the gradient flow dynamics for the heat equation in Euclidean and Riemannian spaces.

\paragraph*{Heat-based distances.}
\setlength{\columnsep}{6pt}%
Using the heat equation for computing approximate geodesic distances has been introduced to geometry processing by Crane et al.~\cite{crane2013geodesics,crane2017heat}. 
This method is based on two main steps. 
First, diffusing heat for a short time step starting from heat concentrated at the source points. 
Then, the normalized gradients of the heat function are used to compute the approximate geodesic distances by solving a Poisson equation. 
Many generalizations and improvements have been proposed, the latest being generalization to \emph{signed} distance functions (GSD)~\cite{feng2024heat} by diffusing \emph{signed} normals. 
To the best of our knowledge, heat-based distance approximations have not been used in the neural setting so far. 
Following the original heat method, we also diffuse a scalar function from the input sources. 
However, we incorporate additional constraints in the second step, which allows us to correctly orient the heat gradients. 
Hence, unlike GSD, we are not diffusing normals but start with unoriented point clouds to compute a \emph{signed} distance field.
A more detailed comparison to GSD can be found in \Cref{sec:grid_comparison}.

\subsection*{Contributions}
Our main contribution is a novel approach for computing a neural SDF from an unoriented point cloud using the heat method. Specifically, we
\begin{itemize}
	\item provide a theoretically sound and robust variational scheme to approximate first the field of level set normals
	and based on that, in the second step, the signed distance,
	\item compute neural SDFs which are accurate not only \emph{on} the surface, but also in a narrow band \emph{near} the surface,
	\item handle spatially varying point cloud densities,
	\item exemplify the application to neural PDE solving on level sets.
\end{itemize}

\section{Method}
Throughout this paper, we consider a surface $\surface$ which is the boundary of a Lipschitz set, contained in $[-1,1]^3$
such that the complement of this set is connected. Later, we will assume additional smoothness of $\surface$.
Following the original heat method~\cite{crane2013geodesics}, our method comprises two steps. 
First (\Cref{sec:part1}), we compute the solution 
$u\colon\R^3\to\R$ of a small time step of the heat equation with the mean surface measure--reflecting the density of the point cloud--as initial data. This measure is approximated via a 
weighted sum of point measures of a not necessarily uniformly distributed input point cloud. 
In the second step (\Cref{sec:part2}), we compute the SDF $\sdf\colon\R^3\to\R$, by requiring that (1) $\vert\sdf\vert$ is small on the set of input points, (2) the direction of $\nabla \sdf$ is aligned with the direction of $\nabla u$. 
To allow unoriented point clouds as input, we automatically find regions $\B^+$ and $\B^-$ 
on the two sides of the cloud,
and encourage $\sdf$ to have the correct sign in these regions. Both $u$ and $\sdf$ are parameterized with a neural network (see \Cref{sec:implementation} for the details). The regions $\B^\pm$ are computed using a simple deterministic algorithm.

\subsection{Unoriented Point Cloud to Unoriented Normals}
\label{sec:part1}In the first step, we use that the normalized gradients of a short-time solution of the heat equation on $\domain$, with the mean surface area density of a smooth surface as initial data,
approximate the gradient of the distance function $\dist(\cdot, \surface)$ in the vicinity of the surface (cf.~Crane et al \cite{crane2013geodesics} for the case on manifolds).
We restrict here to a computational domain $\domain \coloneqq (-1.2,1.2)^3$ where the surface $\surface$ is scaled to fit into $[-1,1]^3$.
In the simple case, for initial data $u^0\in L^2(\domain)$ a single timestep $\tau$ of the
heat equation 
\begin{align} \label{eq:heat_eq} \partial_t u -\Delta u =0
\end{align}
with initial data $u(0) =u^0$ 
can be approximated with a backward Euler scheme 
\begin{align}\label{eq:BEuler}
	\tfrac{u^\tau-u^0}{\tau} - \Delta u^\tau = 0
\end{align}
with $u^\tau\approx u(\tau,\cdot)$.
We want to parametrize the solution \(u\) using a neural network. Here, we prefer a variational approach to compute the solution to 
\cref{eq:BEuler} over a least-squares loss for the residual of \cref{eq:heat_eq}, as this would decrease the numerical stability. To this end, we introduce the energy 
\begin{align}\label{eq:MMsimple}
	\mmloss(u) \coloneqq
	\int_{\domain} (u-u^0)^2 + \tau \vert \nabla u  \vert^2  \d x\,.
\end{align}
Indeed, the associated Euler-Lagrange equation characterizing the minimizer $u^\tau  \coloneqq   \argmin_{u} \mmloss(u)$ 
is given by
\begin{align*}
	0 = \partial_u \mmloss[u^\tau](\vartheta) =
	2 \int_{\domain} (u^\tau -u^0)\vartheta  + \tau \nabla u^\tau \cdot \nabla \vartheta \d x
\end{align*}
for all test functions $\vartheta \in H^{1}(\domain)$, which is the weak formulation of \eqref{eq:BEuler}.
Here, $H^{1}(\domain)$ is the space of $L^2$ functions on $\domain$ with weak derivatives also in $L^2$.
We remark that this corresponds to a simple example of a minimizing movements approach for the solution of \cref{eq:heat_eq}
\cite{de1993new} (cf.~\cite{park2023deep} in the context of neural networks).

The case of interest in this paper is the mean two-dimensional surface measure as initial data $u^0$, which is a measure and not an element of $L^2(\Omega)$. 
The application of this measure to a function $\phi$
is defined as $\fint_\surface \phi \d a \coloneqq \left(\int_\surface \d a\right)^{-1} \int_\surface \phi \d a$.
When $\surface$ is the boundary of a Lipschitz set, this measure is well-defined on $L^2$ functions on $\surface$ and thus
by the trace theorem on $H^{1}(\domain)$.

In order to apply a similar variational approach for the solution of \cref{eq:BEuler} in case of a measure as initial data we need to modify the energy $\mmloss$.
We expand $\int_{\domain} (u-u^0)^2 \d x$ and skip the term $\int_\Omega (u^0)^2 \d x$, which is irrelevant for the minimization 
since it does not depend on the variable $u$. Thus, we consider the modified minimizing movement energy
\begin{align}\label{eq:minimizing_movements_surface}
	\mmlossmod(u) \coloneqq
	\int_{\domain} u^2 \d x - 2 \fint_\surface  u \d a + \tau \int_{\domain} \vert \nabla u  \vert^2  \d x .
\end{align}
Defining as above $u^\tau  \coloneqq \argmin_{u} \mmlossmod(u)$,
the associated Euler-Lagrange equation for $u^\tau$ is given by
\begin{align}\label{eq:MMEL}
	0 = \int_{\domain} u^\tau \vartheta \d x - \fint_\surface \vartheta \d a +
	\tau \int_{\domain} \nabla u^\tau \cdot \nabla \vartheta \d x
\end{align}
for all $\vartheta \in H^{1}(\domain)$, which is the adapted weak form of \eqref{eq:BEuler}.
The following proposition provides the existence of the discrete heat solution $u^\tau$.
\begin{proposition}[Existence and uniqueness of a heat time step]\label{prop:ex}
	Under the above assumptions
	there exists a unique minimizer $u^\tau$ on $H^{1}(\domain)$, which solves $0 = \partial_u \mmlossmod(u)$.
\end{proposition}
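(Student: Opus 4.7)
The plan is to recast $\mmloss$ in standard quadratic-plus-linear form and then invoke the Lax--Milgram theorem. Concretely, set
\[ a(u,v) \coloneqq \int_\domain u v + \tau\,\nabla u \cdot \nabla v \d x, \qquad \ell(v) \coloneqq \fint_\surface v \d a, \]
so that $\mmloss(u) = a(u,u) - 2\ell(u)$ and $\partial_u \mmloss(u)\cdot\vartheta = 2\bigl(a(u,\vartheta) - \ell(\vartheta)\bigr)$, which identifies the Euler--Lagrange condition with the weak problem $a(u^\tau,\vartheta) = \ell(\vartheta)$ for all $\vartheta \in H^1(\domain)$; this is exactly \eqref{eq:MMEL}.

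First I would verify that $a$ is a continuous, symmetric, coercive bilinear form on $H^1(\domain)$. Continuity is Cauchy--Schwarz, and coercivity is immediate from $\tau > 0$ via
\[ a(u,u) \geq \min(1,\tau)\,\|u\|_{H^1(\domain)}^2. \]
Here the inclusion of the $L^2$ mass term is precisely what avoids the need for any Poincar\'e-type inequality or boundary condition on $\partial\domain$; the bilinear form controls the full $H^1$ norm by itself.

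Next I would check that $\ell \in (H^1(\domain))^*$. This is the one spot where the Lipschitz regularity of $\surface$ genuinely enters: writing $\surface = \partial U$ for the bounded Lipschitz set $U \subset [-1,1]^3$, the standard trace theorem provides a bounded operator $T \colon H^1(U) \to L^2(\surface)$, and composing with the bounded restriction $H^1(\domain) \to H^1(U)$ yields $\|Tu\|_{L^2(\surface)} \leq C\|u\|_{H^1(\domain)}$. Since $\surface$ has finite surface area, Cauchy--Schwarz then gives $|\ell(u)| \leq C'\|u\|_{H^1(\domain)}$.

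With $a$ continuous and coercive and $\ell$ continuous, Lax--Milgram produces a unique $u^\tau \in H^1(\domain)$ solving \eqref{eq:MMEL}. Because $a$ is symmetric and positive, the same $u^\tau$ is automatically the unique minimizer of $\mmloss$; equivalently, strict convexity of $\mmloss$ together with its coercivity and weak lower semicontinuity yield existence and uniqueness of the minimizer directly by the direct method of the calculus of variations, with the Euler--Lagrange equation following by differentiation. I do not anticipate a real obstacle here; the only non-routine ingredient is the trace estimate on the Lipschitz surface $\surface$, and everything else is standard Hilbert-space machinery.
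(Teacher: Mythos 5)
Your proposal is correct and follows essentially the same route as the paper: boundedness of the mean surface functional via the trace theorem on the Lipschitz boundary, coercivity and continuity of the quadratic form on $H^1(\domain)$, and the Lax--Milgram theorem (with symmetry) to obtain the unique minimizer satisfying the Euler--Lagrange equation. Your write-up merely spells out the constants and the factorization of the trace operator in more detail than the paper does.
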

The proof can be found in \Cref{appproofs}.

As explained before, 
the normalized gradient
\begin{align}\label{eq:ntau}
	\normal^\tau \coloneqq - \frac{\nabla u^\tau}{\lvert \nabla u^\tau \rvert},
\end{align}
which we assume to be well-defined almost everywhere,
yields for smooth surfaces $\surface$ a good approximation for the gradient of the distance function, 
\ie the unoriented normal field of \(\surface\), which is well-defined up to the medial axis.
However, note that this gradient is consistent only in the near field of the surface
and corresponds to the \emph{unsigned} distance and not the \emph{signed} we actually would like to compute (\Cref{fig: visuheat}).

\begin{figure}[b]
	\centering
	\includegraphics[width=\linewidth]{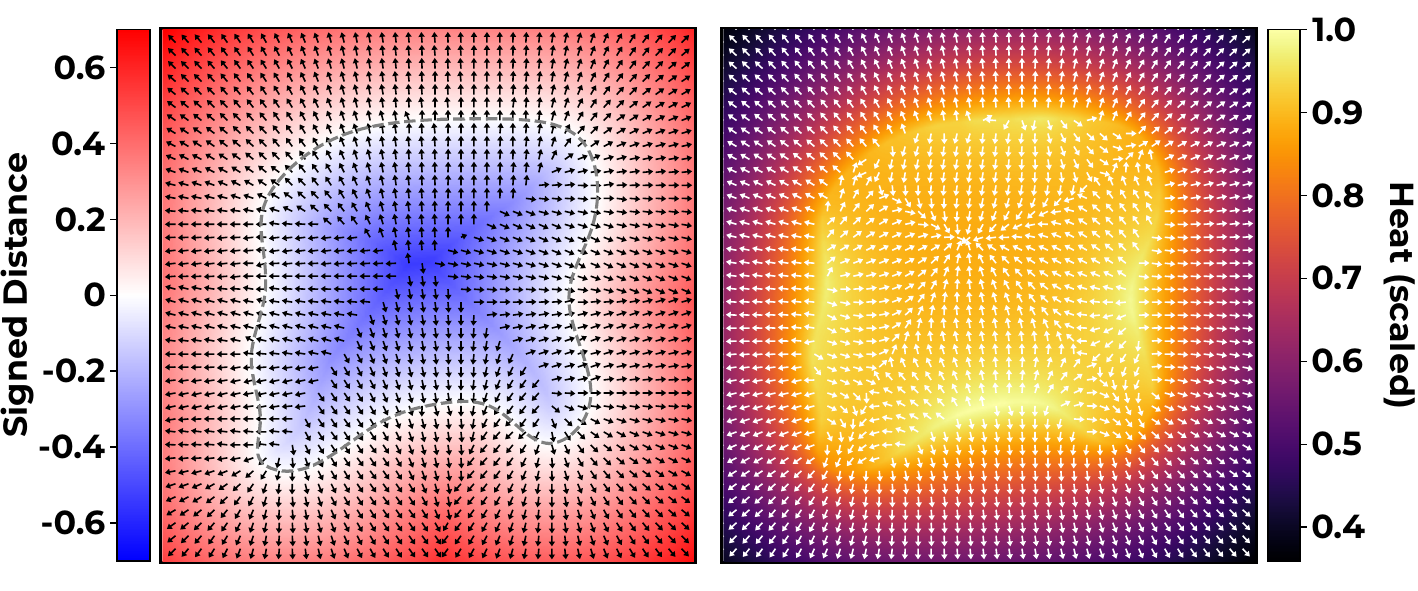}
	\caption{Comparison of gradient directions between 
		a signed distance of a (hyper)surface in 2D (left), and the solution of a small time step of the heat equation with
		mean surface measure as initial data (right).\label{fig: visuheat}
	}
\end{figure}

\subsection{Unoriented Normals to SDF}
\label{sec:part2}
In the second step, we assume that we are given an approximate, unoriented normal field \(\normal^\tau\) and aim for the SDF $\sdf$ of the surface. 
To this end, we pick up the ansatz by Crane \etal \cite{crane2013geodesics} and consider a modification of the second step of their heat method. 
We construct an SDF approximation from $\normal^\tau$  with \(\sdf <0\) inside the surface, where the gradient should point 
in the opposite direction of \(\normal^\tau\), and conversely, \(\sdf > 0\) outside the surface with the gradient pointing in the same direction as \(\normal^\tau\) (\Cref{fig:sketchSigns}). 
To this end, we seek a minimizer of the normal fitting objective
\begin{equation}
	\normalloss(\sdf) \coloneqq \int_{[\sdf < 0]}  \lvert \nabla \sdf + n^\tau \rvert^2\d x + \int_{[\sdf \ge 0]}  \lvert \nabla \sdf - n^\tau \rvert^2\d x,
\end{equation}
where \([\sdf < 0] = \{ x \in \domain \mid \sdf(x)<  0\}\) and  \([\sdf \geq 0] = \{ x \in \domain \mid \sdf(x)\geq  0\}\).
Here, to have a decomposition of the domain, we arbitrarily 
included the set $\{\phi=0\}$  in the second term, which does not make a difference since this set is expected to be lower dimensional.
Furthermore, we want the zero-level set of \(\sdf\) to coincide with the given surface, \ie we take into account the surface fitting objective
\begin{equation} 
	\fitloss(\sdf) \coloneqq \int_\surface \sdf^2 \d a.
\end{equation}

The sum \(\normalloss + \fitloss\) has at least four minima: the signed distance, the unsigned distance, and their respective negatives. 
Hence, we need an additional term to favor the SDF as the desired solution.
For this, we assume to be given sets \(\B^-\) on the inside of \(\surface\) and \(\B^+\) on the outside, cf. \Cref{sec:Bplusminus} on 
how to generate these sets automatically.
Now, we define the orientation objective 
\begin{equation}
	\orientloss(\sdf) \coloneqq \int_{\B^-} \chi_{[\sdf > 0]} \d x + \int_{\B^+} \chi_{[\sdf < 0]} \d x,
\end{equation}
which promotes that \(\sdf \leq 0\) in \(\B^-\) and \(\sdf \geq 0\) in \(\B^+\). Here, we denote by $\chi_{[\sdf > 0]}$ and $\chi_{[\sdf < 0]} $ the characteristic functions of the respective sets. That is, $ \chi_{[\sdf > 0]}(x)=1$ if $\sdf(x)>0$ and $0$ otherwise; similarly for $\chi_{[\sdf < 0]}$.
Finally, our overall objective for computing the SDF becomes
\begin{equation}
	\label{eq:overall_objective}
	\sdfloss(\sdf) \coloneqq \normalloss(\sdf) + \lambda_{\mathrm{fit}}  \fitloss(\sdf) + \lambda_{\B}  \orientloss(\sdf).
\end{equation}
In fact, it will turn out that, in our descent scheme, $\orientloss$ vanishes after a few descent iterations, which 
allows us to choose $\lambda_{\B}=1$.
We obtain the following existence result, whose proof can be found in
\Cref{appproofs}.
\begin{theorem}[Existence of minimizers for $\sdfloss$] \label{thm:sdl}\ \\
	Suppose the normal field
	$n^\tau$ is measurable, and two Borel sets $\B^+$ and $\B^-$ in $\domain$ are given,
	then there exists a minimizer of the signed distance loss $\sdfloss$ in $H^{1}(\domain)$.
\end{theorem}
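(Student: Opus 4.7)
The plan is to apply the direct method of the calculus of variations in the Hilbert space $H^1(\domain)$: produce a weakly convergent minimizing sequence and then establish weak lower semicontinuity of each term of $\sdfloss$. I will use that $|n^\tau|=1$ a.e., which is consistent with the defining formula $n^\tau=-\nabla u^\tau/|\nabla u^\tau|$ and places $n^\tau$ in $L^\infty(\domain)\subset L^2(\domain)$. For coercivity, take a minimizing sequence $(\sdf_k)_k$. Since all three contributions are non-negative, each is individually bounded along the sequence. Expanding the squares in $\normalloss$ and applying Cauchy--Schwarz gives $\normalloss(\sdf)\geq\bigl(\|\nabla\sdf\|_{L^2}-\|n^\tau\|_{L^2}\bigr)^2$, yielding a uniform bound on $\|\nabla\sdf_k\|_{L^2}$. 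The fitting term bounds the trace norm $\|\sdf_k\|_{L^2(\surface)}$. A standard Poincaré--trace inequality $\|\sdf\|_{H^1(\domain)}\leq C\bigl(\|\nabla\sdf\|_{L^2(\domain)}+\|\sdf\|_{L^2(\surface)}\bigr)$---proved by a compactness--contradiction argument using Rellich--Kondrachov together with the continuity of the trace $H^1(\domain)\to L^2(\surface)$ (well defined because $\surface$ is a Lipschitz interface in $\domain$)---then yields $\|\sdf_k\|_{H^1(\domain)}\leq C$.

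By reflexivity of $H^1(\domain)$ I can extract a subsequence with $\sdf_k\rightharpoonup\sdf^*$ in $H^1(\domain)$ and, via Rellich, $\sdf_k\to\sdf^*$ strongly in $L^2(\domain)$ and pointwise a.e.\ on a further subsequence. The functional $\fitloss$ is weakly lower semicontinuous since the trace is a bounded linear map into $L^2(\surface)$ and the $L^2(\surface)$-norm squared is weakly l.s.c. For $\orientloss$, the indicator $\chi_{(0,\infty)}$ is lower semicontinuous on $\R$, so whenever $\sdf_k(x)\to\sdf^*(x)$ one has $\chi_{[\sdf^*>0]}(x)\leq\liminf_k\chi_{[\sdf_k>0]}(x)$, and analogously for the negative-sign indicator; Fatou's lemma then gives $\orientloss(\sdf^*)\leq\liminf_k\orientloss(\sdf_k)$.

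The main obstacle is the lower semicontinuity of $\normalloss$, whose integrand depends non-convexly on $\sdf$ through the indicator $\chi_{[\sdf<0]}$. I would bypass this via the Stampacchia chain rule: for every $\sdf\in H^1(\domain)$ one has $|\sdf|\in H^1(\domain)$ with $\nabla|\sdf|=\mathrm{sgn}(\sdf)\,\nabla\sdf$ a.e., where the convention on $\{\sdf=0\}$ is irrelevant because $\nabla\sdf=0$ a.e.\ on that set. Expanding the squares and using $|n^\tau|=1$ a.e., this rewrites $\normalloss$ as
\[
\normalloss(\sdf)=\int_\domain|\nabla\sdf|^2\,\d x+|\domain|-2\int_\domain\nabla|\sdf|\cdot n^\tau\,\d x.
\]
The first integral is weakly l.s.c.\ by convexity. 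Since $\|\,|\sdf_k|\,\|_{H^1(\domain)}=\|\sdf_k\|_{H^1(\domain)}$ is bounded and $|\sdf_k|\to|\sdf^*|$ in $L^2(\domain)$, a subsequence satisfies $|\sdf_k|\rightharpoonup|\sdf^*|$ in $H^1(\domain)$, hence the linear pairing $\int_\domain\nabla|\sdf_k|\cdot n^\tau\,\d x$ converges to $\int_\domain\nabla|\sdf^*|\cdot n^\tau\,\d x$ against the $L^2$-test field $n^\tau$. Thus $\normalloss$ is weakly l.s.c., and summing the three semicontinuity statements yields $\sdfloss(\sdf^*)\leq\liminf_k\sdfloss(\sdf_k)=\inf\sdfloss$, so $\sdf^*$ is a minimizer.
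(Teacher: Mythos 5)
Your proof is correct and follows the same global strategy as the paper's: the direct method, the identical rewriting of $\normalloss$ via $\sgn\sdf\,\nabla\sdf=\nabla|\sdf|$ (so that $\normalloss(\sdf)=\int_\domain|\nabla\sdf|^2-2\nabla|\sdf|\cdot n^\tau+1\,\d x$), the same Fatou/lower-semicontinuity-of-the-indicator treatment of $\orientloss$, and essentially the same coercivity estimate (the paper uses $a^2\le 2(a-b)^2+2b^2$ together with the equivalence of $\|\nabla\cdot\|_{L^2(\domain)}+\|\cdot\|_{L^2(\surface)}$ with the $H^1$ norm, which is your Poincar\'e--trace inequality). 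The one step where you genuinely diverge is the weak continuity of the cross term $\sdf\mapsto\int_\domain\nabla|\sdf|\cdot n^\tau\,\d x$. The paper isolates this as Lemma~\ref{lemma:weakcont} and proves it by mollifying $n^\tau$ to smooth fields $n_k$, integrating by parts to reduce to $L^2$-continuity in $|\psi|$, invoking Rellich, and then a triangle-inequality limit in $k$. You instead observe that $\|\,|\sdf_j|\,\|_{H^1}=\|\sdf_j\|_{H^1}$ is bounded, identify the weak $H^1$ limit of $|\sdf_j|$ as $|\sdf^*|$ via the strong $L^2$ convergence, and then pass to the limit in the pairing with the fixed $L^2$ field $n^\tau$ by linearity. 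Your route is shorter and avoids both the density argument and the integration by parts; the paper's lemma, on the other hand, is stated as a self-contained weak-continuity result for the map $A(\cdot,n)$ that does not require first knowing that $|\sdf_j|$ converges strongly in $L^2$, though in the end both arguments rest on the same two ingredients (the chain-rule identity for $\nabla|\sdf|$ and Rellich compactness). One cosmetic point: the theorem as stated only assumes $n^\tau$ measurable, and your argument (like the paper's expansion $|n^\tau|^2=1$) uses $|n^\tau|=1$ a.e., i.e.\ $n^\tau\in L^\infty\subset L^2$; you state this assumption explicitly, which is the right thing to do.
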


\begin{figure}
	\centering
	\includegraphics[width=\columnwidth]{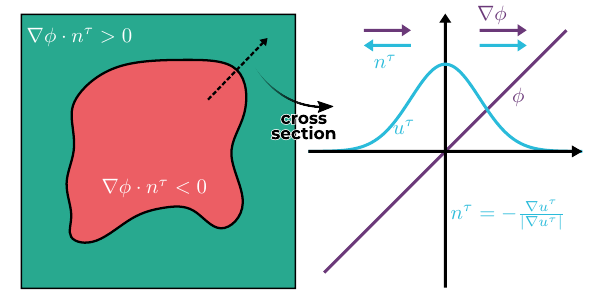}
	\caption{Sketch of the heat solution \(u^\tau\) and the SDF \(\phi\) on a one dimensional cross section. Inside the (hyper)surface, the SDF gradient \( \nabla \phi\) and the normalized gradient \( n^\tau\) of the heat time step point in opposite directions; outside, they point in the same direction. \label{fig:sketchSigns}}
\end{figure}
\section{Implementation} \label{sec:implementation}
We require some additional ingredients for both steps. For the heat method (\cf \Cref{sec:heat_details}), we describe the quadrature rules for numerically approximating the integrals incorporating the point cloud density. Additionally, we introduce a  "far field" heat solution that allows us to use a very small time step while still obtaining non-zero heat throughout the entire domain. For the SDF computation (\cf \Cref{sec:sdf_details}), we explain the definition of the regions $\B^\pm$ that generalize the heat method to unoriented point clouds and their incorporation in the loss function using a smoothed characteristic function.
\begin{figure}[b]
	\centering
	\includegraphics[width=\linewidth]{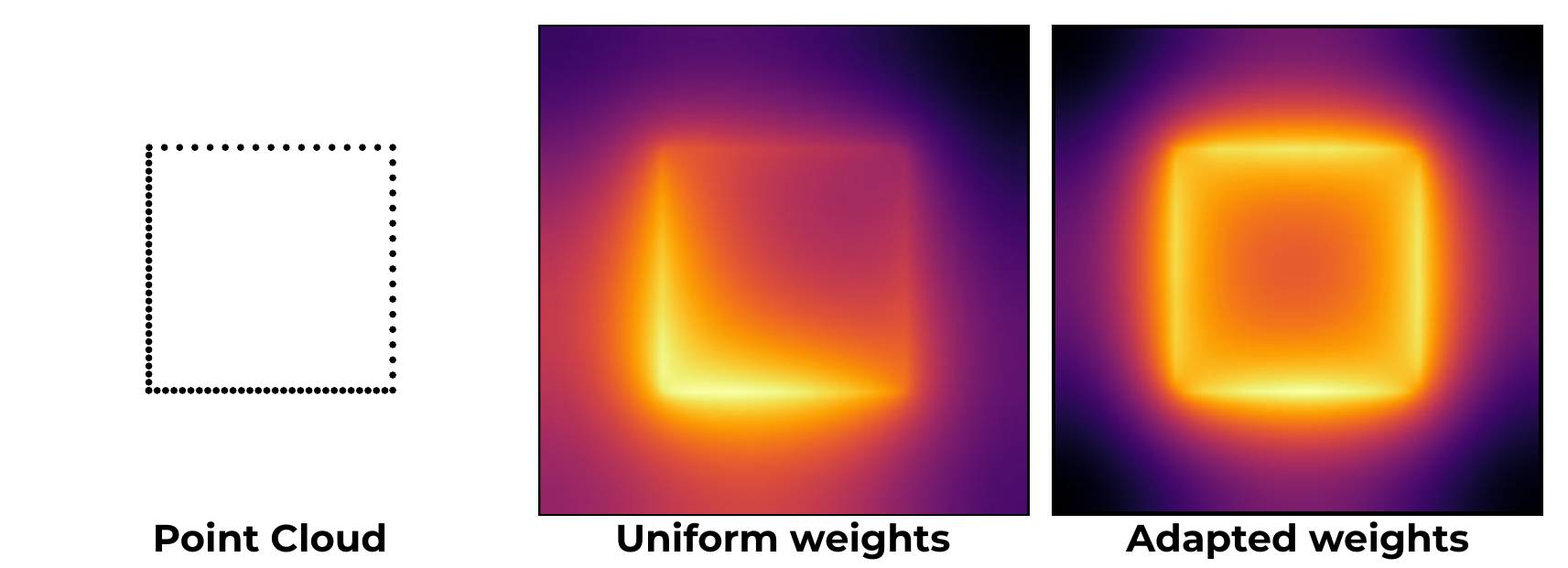}
	\caption{Left: non-uniform input point cloud on a square (qualitative visualization), middle: heat solution $u^\tau$ in two dimensions 
		using uniform weights for the mean (hyper)surface integral \Cref{eq:weights} and right: heat solution using the locally  adapted 
		weights \Cref{eq:uniformweights}. \label{fig:2Dweights}}
\end{figure}

\subsection{Heat Solution}
\label{sec:heat_details}
\subsubsection{Quadrature \& Point Clouds}
To evaluate the loss functions, we need of course, to numerically approximate the involved integrals.
To this end, we scale all the examples to  \( [-1,1]^3\) (with a distance of at least $0.2$ to the boundary of $\domain$).
Then, in every iteration of the descent algorithm, we uniformly sample points from \(\Omega\) and approximate the integral by the average of the integrand's values at these sampled points. This number of quadrature points was set to $10000$ for all experiments.
The first step of the algorithm involves the evaluation of the functional $u\mapsto \fint_\surface u \d a$.
The surface $\surface$ is described by a point cloud $(x_i)_{i=1,\ldots, N} \subset \R^3$.
In case the point cloud is uniformly distributed on $\surface$, one might approximate the mean surface integral by
\begin{align}
	\fint_\surface u \d a \approx \frac{1}{N} \sum_{i=1}^N u(x_i). \label{eq:uniformweights}
\end{align} 
In the general case of a non-uniform distribution of the points on the surface $\surface$, we use the following local averaging strategy.
Let $\nu\colon\R^3\to \R$ be a compactly supported, non-negative, smooth function, \eg, $\nu(s) = \exp((\vert s\vert^2-1)^{-1})$ for $\vert s\vert < 1$ and $0$ else.
Then, for $\nu^\epsilon (s) = \epsilon^{-3} \nu(\epsilon^{-1} s)$ we first define the non-normalized local adaptive weights 
$$
\tilde \omega^\epsilon_i \coloneqq \left(\sum_{j=1,\ldots, N} \nu^\epsilon (x_i-x_j)\right)^{-1}$$
and then normalize these by choosing $\omega^\epsilon_i  =\frac{\tilde \omega^\epsilon_i}{\sum_{j=1}^{N} \tilde \omega^\epsilon_j}$.
Finally, we obtain 
\begin{align}
	\fint_\surface u \d a \approx  \sum_{i=1}^N \omega^\epsilon_i u(x_i) \label{eq:weights}
\end{align}
as a density rescaled approximation of the integral mean of $u$ on $\surface$. In practice, $\epsilon$ is chosen such that every $\epsilon$-ball centered at any point contains at least 12 neighboring points. In \Cref{fig:2Dweights}, we show the heat solution $u^\tau$ for a non uniform input point cloud comparing uniform weights \Cref{eq:uniformweights} with the local adaptive weights \Cref{eq:weights}. 
It demonstrates that a proper weighting is necessary for non-uniform point clouds to obtain accurate normal directions from the heat solution.

\begin{figure}[t]
	\includegraphics[width=\linewidth]{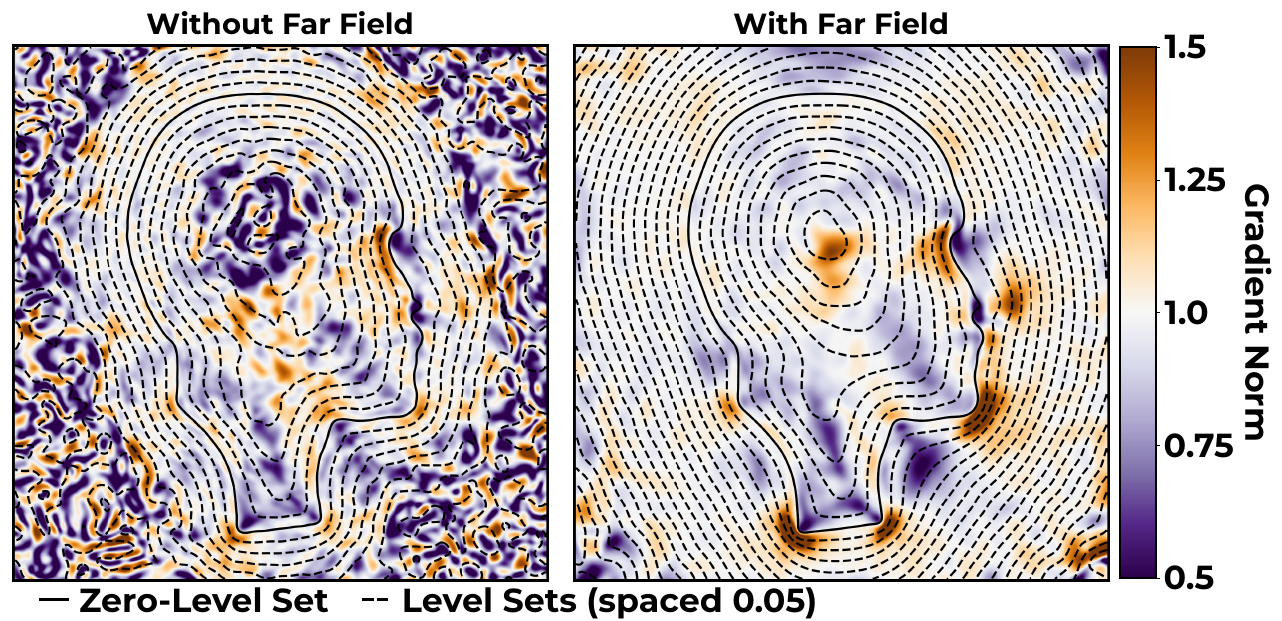}
	\caption{
		Level sets and gradient norms on a slice of the Max Planck head, where the  SDF is computed without far field (left) and with a near--far field 
		blending of the normalized gradient fields of the heat diffusion (right). \label{fig:farfieldstudy}
	}
\end{figure}

\subsubsection{Far Field Normals}
To obtain a good approximation of surfaces with highly curved features, we use small time steps \(\tau\) 
in the variational heat flow time step \Cref{eq:minimizing_movements_surface}  to obtain a consistent field of normals $\normal^\tau$.
However, we observed that for such small time steps, the heat solution \(u^\tau\) has very small 
gradient norms $\lvert\nabla u^{\tau}\rvert$ further away from the surface $\surface$.
This leads to a lack of robustness when normalizing $\nabla u^\tau$ to compute $\normal^\tau$. 
To mitigate this, we compute a second solution $u^{\hat\tau}$ of the variational heat flow problem 
\Cref{eq:minimizing_movements_surface} for a significantly larger time step size $\hat\tau$ and use the resulting less precise normal field 
$\normal^{\hat\tau}$ in the far field. 
To still obtain an overall smooth normal field, we blend the gradient fields of the heat solution around a critical value \(\kappa\) of the solution $u^\tau$.
Hence, we define the blended normal field
\begin{align}\label{eq:blendednormals}
	\normal^\ast(x) = \frac{(1-\beta_\kappa(u^\tau(x))) \nabla u^\tau(x) + \beta_\kappa(u^\tau(x)) \nabla u^{\hat\tau}(x)}
	{\lvert (1-\beta_\kappa(u^\tau(x))) \nabla u^\tau(x) + \beta_\kappa(u^\tau(x)) \nabla u^{\hat\tau}(x)\rvert},
\end{align}
for $\beta_\kappa(s) = \mu(\tfrac{s}{\kappa})$ using the $C^1$ smooth, cubic Hermite blending function
$\mu\colon\R\to \R_{\geq0}$ with $\mu(s)=1$ for $s<0$, $\mu(s)=\tfrac14 (2s+1)(2s-2)^2$ for $0 \leq s\leq 1$ and $\mu(s)=0$ for $s>1$. In fact, for $u^\tau \geq \kappa$ we have $	\normal^\ast(x) = n^\tau(x)$ and in the limit for decreasing values of $u^\tau $ we obtain  $\normal^\ast(x) = n^{\hat\tau}(x)$.
We simply use $\normal^\ast$ instead of $\normal^\tau$ in the smoothed normal approximation objective \eqref{eq:overall_objective}.
When minimizing $\normalloss$ for this normal field, we combine  
an accurate signed distance in the near field and a proper identification of the zero-level set 
with a robust extension of the signed distance in the far field. In the applications we choose $\kappa =\tfrac35 \max_{u^\tau}$,
where $\max_{u^\tau}$ is the maximum of $\vert u^\tau \vert$ on the cell centers of the regular grid used below
to distinguish inside and outside of $\surface$ (\cf \Cref{sec:sdf_details}).

The results in \Cref{fig:farfieldstudy} indeed show that using the blended normal field leads to much better gradient norms and level sets farther from the surface.
However, they degrade near the medial axis of the shapes. Thus creating a trade-off. 
If one is only interested in accurate results in a narrow band, the near field is sufficient. For our quantitative evaluations in the comparison section, we have always used the far field approach.

\subsection{SDF from Heat Solution}
\label{sec:sdf_details}
\subsubsection{Inside \& Outside}\label{sec:Bplusminus}
We construct the sets \(\B^-\) on the inside of \(\surface\) and \(\B^+\) on the outside using the following simple and efficient algorithm (see \Cref{figureinsideoutside}).
We consider a regular grid with edge length $h>0$ and cell centers $\alpha h$ with $\alpha\in \mathbb{Z}^3$ and denote the corresponding open cells by $B_\alpha$.
Step 1. We mark all cells whose closure has a non-empty intersection with $\surface$ (\ie, containing points of the given point cloud) as interfacial cells.
Step 2. Beginning with one cell intersecting the boundary of $\domain$ that is not marked as interfacial,
we successively mark each of the six neighboring cells for every outside cell $B_\alpha$, which are not yet marked, as outside.
Step 3. We mark all remaining cells as interior. 
Finally, we define $\B^+$ as the union of all outside cells and \(\B^-\) as the union of all inside cells.
In practice, a rather coarse grid is sufficient, \eg, we used $64^3$ grid points and grid size $h=0.0375$. We note that for a very sparse point cloud, we may require a larger $h$. For example, for the sparse bean model in \Cref{tab:beanexperiment}, we used $2h$ as the grid size, which was the only case where different parameters were needed. 
\begin{figure}[b]
	\centering
	\includegraphics[width=\linewidth]{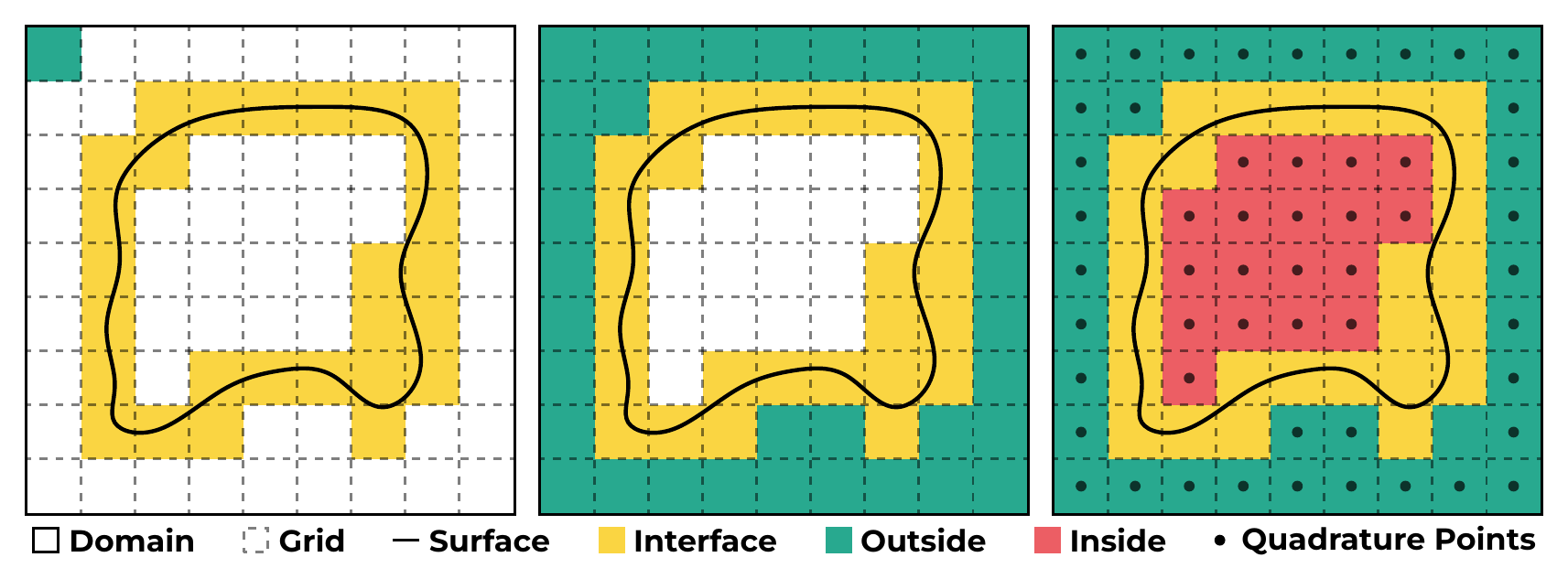}
	\caption{
		Sketch of the algorithm to determine \(\B^\pm\): 
		step 1. initial marking of interfacial cells, step 2. iterative marking of outside cells, step 3. marking all remaining cells as inside.
		\label{figureinsideoutside}
	}
\end{figure}

\subsubsection{Smoothed Characteristic Functions}
The normal approximation objective \(\normalloss\) depends on the sets \([\sdf < 0]\) and \([\sdf > 0]\), which means that it depends on the function \(\sdf\) in a non-smooth manner due to the thresholding operation.
To facilitate the numerical minimization of \(\normalloss\), we introduce a smooth approximation of the corresponding characteristic functions such that the objective becomes differentiable
(see \Cref{fig:etaProfile}).
To this end, we define $\eta_\delta(s) \coloneqq \eta(\delta^{-1}s)$ using the $C^1$ smooth, cubic Hermite blending function
$\eta\colon\R\to \R_{\geq0}$ with $\eta(s)=1$ for $s<-1$, $\eta(s)=\tfrac14 (s+2)(s-1)^2$ for $|s|\leq 1$ and $\eta(s)=0$ for $s>1$.
For \(\delta \to 0\) and $s\ne 0$, \(\eta_\delta(s)\) converges to \(\chi_{\{s<0\}}\) and thus
\(\eta_\delta(\sdf(\cdot))\) converges pointwise to \(\chi_{[\sdf < 0]}\)
on $[\phi\ne  0]$ for a fixed \(\sdf\). 
Finally, this allows us to introduce the smoothed approximation
\begin{align} 
	\label{eq:normal_loss_delta}
	\normalloss(\sdf) \approx \int_\domain  \eta_\delta \circ \sdf \lvert \nabla \sdf + n \rvert^2 + (1-\eta_\delta \circ \sdf) \lvert \nabla \sdf - n \rvert^2\d x
\end{align}
of the normal approximation objective.
We apply the same smoothing to \(\orientloss\) and obtain
\begin{equation}
	\orientloss(\sdf) \approx \int_{\B^-} 1-\eta_\delta \circ \sdf \d x + \int_{\B^+} \eta_\delta \circ \sdf \d x,
\end{equation}
which promotes that \(\sdf \leq -\delta\) in \(\B^-\) and \(\sdf \geq \delta\) in \(\B^+\).
\begin{figure}[ht]
	\centering
	\includegraphics[width=\linewidth]{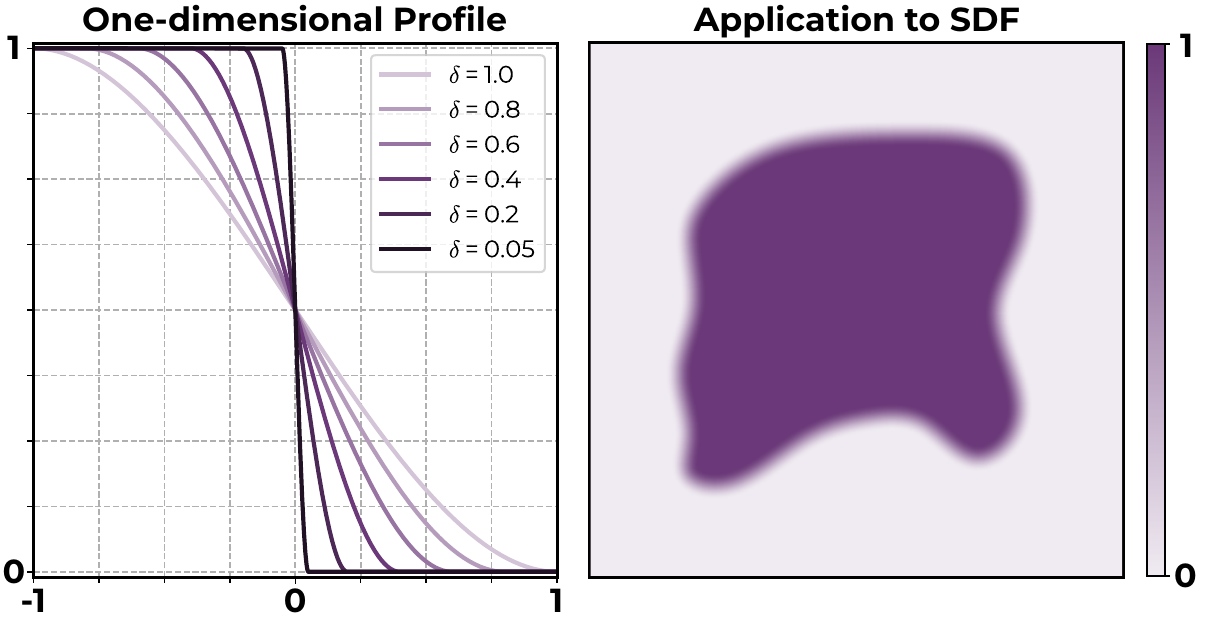}
	\caption{
		Left: sketch of the blending function $s\mapsto \eta_\delta(s)$ for varying $\delta$ value; right: 2D sketch of $\eta_\delta \circ \phi$ with $\delta=0.05$ approximating the characteristic function of the interior of \(\surface\). \label{fig:etaProfile}
	}
\end{figure}
To simplify the numerical integration of the orienting objective \(\orientloss\), we use midpoint quadrature, \ie
\begin{equation}\label{eq:BpmLossreg}
	\orientloss (\sdf)\approx  h^3 \sum_{\substack{\alpha\in \mathbb{Z}^3\\B_\alpha \subset \B^-}} 
	(1 - \eta_\delta(\sdf(\alpha h))) + h^3 \sum_{\substack{\alpha\in \mathbb{Z}^3\\B_\alpha \subset \B^+}} \eta_\delta(\sdf(\alpha h)).
\end{equation}
Finally, we choose \(\delta\) and \(h\) such that $2\delta \leq h$, which ensures that  the support of the integrand in \eqref{eq:BpmLossreg}
does not intersect the surface \(\surface\).

\section{Experimental Results}
We implemented our method using the PyTorch framework. All experiments took under an hour, including both optimization steps. 
All experiments were performed on a workstation with two AMD EPYC 7402 processors, 256GB RAM, and an NVIDIA A100 with 40GB memory.
The code is available at \url{https://github.com/sweidemaier/HeatSDF}.

\subsection{Parameters and Error Metrics}

\paragraph*{Network parameters.}
To represent \(u\) and \(\sdf\), we use fully connected neural networks with periodic activation functions as proposed by \cite{sitzmann2020implicit}.
Concretely, we use four hidden layers of size 256 and the sine function as activation in all experiments.
We use the Adam optimizer \cite{Kingma2014AdamAM} to approximately solve our minimization problems.
All experiments were trained for a total number of 50 epochs, each consisting of 1000 batches of constant batch size. During training, we decrease the learning rate up to $10^{-8}$ using the PyTorch function ReduceLROnPlateau with patience $2$; as an initial learning rate, we choose $10^{-4}$. 

\paragraph*{Weighting of the loss terms.}
We have a single weighting parameter $\lambda_{\mathrm{fit}}$ in front of $\fitloss$, constraining the SDF to be close to zero on the input point cloud. We see a trade-off between a good surface fitting minimizing $\fitloss$ and the normal alignment loss $\normalloss$. This is reflected in the decreasing reconstruction error for increasing $\lambda_{\mathrm{fit}}$ which comes with an increase of the eikonal error, see \Cref{fig:lambdastudy}. Extremely high values of $\lambda_{\mathrm{fit}}$ might lead to surfaces with small artifacts. However, for the range covered in the plot, all surfaces can be recovered without artifacts.
In all our experiments, we choose $\lambda_{\mathrm{fit}} = 100$ as our focus lies on good SDF qualities.
\begin{figure}[t]
	\centering
	\includegraphics[width=\linewidth]{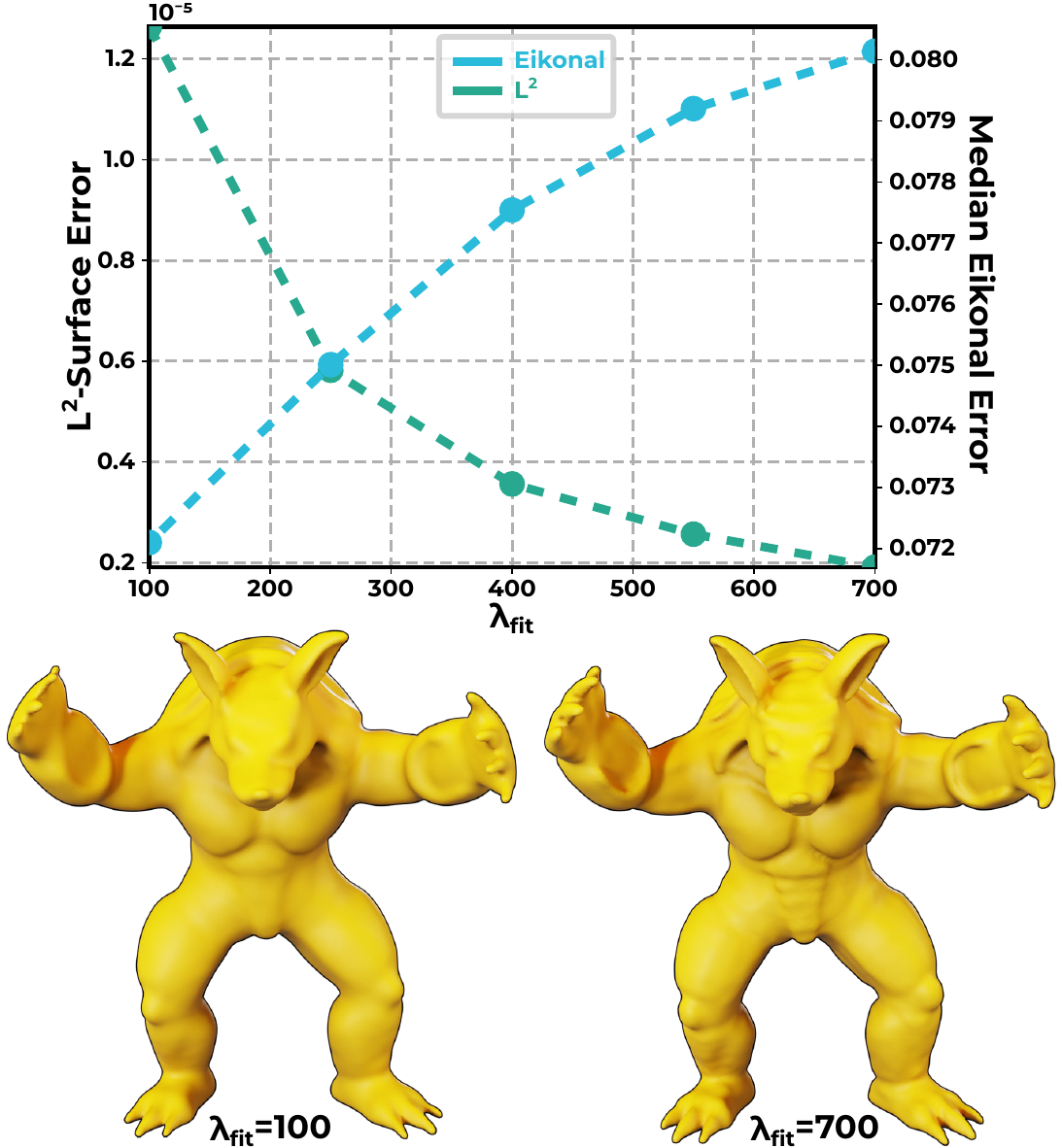}
	\caption{(top) Increasing $\lambda_{\mathrm{fit}}$ leads to a better surface fitting but higher eikonal error for the
		armadillo shape; (bottom) qualitative results for $\lambda_{\mathrm{fit}}= 100$ (left) and $700$ (right). \label{fig:lambdastudy}}
\end{figure}

\paragraph*{Timestep and blending function.}
For all experiments, we choose $\tau=0.005$ for the small time step and $\hat{\tau}=0.1$.
We use a smoothed blending function $\eta_\delta$ to approximate the characteristic functions of the sets \([\sdf < 0]\) and \([\sdf > 0]\) in \eqref{eq:normal_loss_delta}.
We study the dependence of our result on the parameter $\delta$, which corresponds to the steepness of $\eta_\delta$. For $\delta \in [0.001, 0.01]$ the $L^2$ surface error only changes by orders of $10^{-6}$ and the eikonal error in a narrow band by $10^{-4}$, see \Cref{fig:deltastudy}. Hence, the choice of this parameter is not very sensitive. 
In all our experiments, we choose $\delta=0.005$.

\begin{table}
	\centering
	\begin{tabular}{lcc}
		\toprule
		\multicolumn{1}{c}{$\mathbf{\delta}$} &$\mathbf{E_{recon}^{\mathcal{S}}}$ & $\mathbf{E_{eik}}$\\
		\midrule
		0.01&\(1.312\cdot 10^{-5}\)&0.07020\\
		0.0075&\(1.287\cdot 10^{-5}\)&0.07133\\
		0.005&\(1.287\cdot 10^{-5}\)&0.07210\\
		0.0025&\(1.248\cdot 10^{-5}\)&0.07381\\
		0.001&\(1.161\cdot 10^{-5}\) &0.07418 \\
		\bottomrule
	\end{tabular}
	\caption{Error measurements for varying smoothing parameter $\delta$ of the blending function for the armadillo model and $\lambda_{\mathrm{fit}}  =100$.\label{fig:deltastudy}}
\end{table}

\paragraph*{Evaluation metrics.} \label{sec:evaluationMetrics}
We use two metrics defined on the input point cloud, evaluating the SDF and its gradient on the zero-level set, and two metrics defined in a \emph{narrow band} sampled near the surface, evaluating the SDF and the eikonal error.

To measure the reconstruction error, we compute the deviation of the neural SDF $\phi$ from being zero on the ground truth surface in an $L^2$ sense:  
\begin{equation*}
	\mathbf{E_{recon}^{\mathcal{S}}}\coloneqq\frac{1}{\vert \mathcal{M}\vert }\sum_{x \in \mathcal{M}} |\phi(x)|^2
\end{equation*}
using a set $\mathcal{M}$ of $50$k distinct on-surface points sampled on the ground truth surface.
To this end, we exploit that all our considered models come with a ground truth mesh.
Further, we evaluate the normal alignment error on the input surface by computing the cosine distance:
\begin{equation*}
	\mathbf{E_{recon}^n} \coloneqq 1 - \frac{1}{|\mathcal{F}|} \sum_{x\in \mathcal{F}} n(x)\cdot\frac{\nabla  \phi(x)}{\vert\nabla \phi(x)\vert} 
\end{equation*}
where $n(x)$ denote the discrete normals of the respective ground truth mesh at the triangle centers $x$ and $\mathcal{F}$ 
the set of face centers of the respective mesh.

We further evaluate the quality of the results on a set $\mathcal{N}$ of $10$k points in a narrow band with distances in $[-0.1,0.1]$ from $\surface$. 
The set $\mathcal{N}$ was generated once per model using rejection sampling based on the ground truth SDF and was subsequently used consistently across all evaluations.
We compute the SDF error compared to a ground truth signed distance  
$\mathrm{d}(\cdot, \mathcal{S})$ to the mesh surface $\mathcal{S}$:
\begin{equation*}
	\mathbf{E_{SDF}} \coloneqq \frac{1}{|\mathcal{N}|}\sum_{x\in \mathcal{N}}|\phi(x) - \mathrm{d}(x, \mathcal{S})|,
\end{equation*}
as well as the eikonal error: 
\begin{equation*}
	\mathbf{E_{eik}} \coloneqq \mathrm{median} \left\{x\in \mathcal{N} \, \mid\,  |1 - |\nabla\phi(x)||\right\}. 
\end{equation*}

\subsection{Comparisons}
\subsubsection{Comparison to Neural SDF Methods}\label{sec:Comp}
We evaluate our method by comparing it to other neural approaches that compute signed distance functions from point clouds. 

We compare with 1-Lip \cite{coiffier20241} that focuses on having low eikonal errors and good distance approximations by making use of a Lipschitz network architecture. It additionally requires oriented normals.  Moreover, we consider HESS \cite{wang2023neural}, SALD \cite{atzmon2021sald}, and HotSpot \cite{wang2025hotspotsigneddistancefunction}. For details on the models and point clouds, see \Cref{sec:AppB}.

In \Cref{fig:quantComp}, we show a quantitative comparison of the different methods over a range of shapes (\cf \Cref{sec:AppB}, \Cref{fig:OurZeros}). We plot the surface reconstruction error \(\mathbf{E_{recon}^{\mathcal{S}}}\) against the two measures for SDF quality \(\mathbf{E_{eik}}\) and $\mathbf{E_{SDF}}$. The results show that our method achieves a balanced trade-off between an accurate surface reconstruction and a good SDF approximation. 
This is in contrast to HESS, which produces good reconstruction results but has larger eikonal and SDF errors and 1-Lip, which has low eikonal errors but loses detail in the surface reconstruction. Some results of SALD and HESS had flipped signs, which we inverted before measuring the errors.

\begin{figure}[!ht]
	\centering
	\includegraphics[scale=0.5]{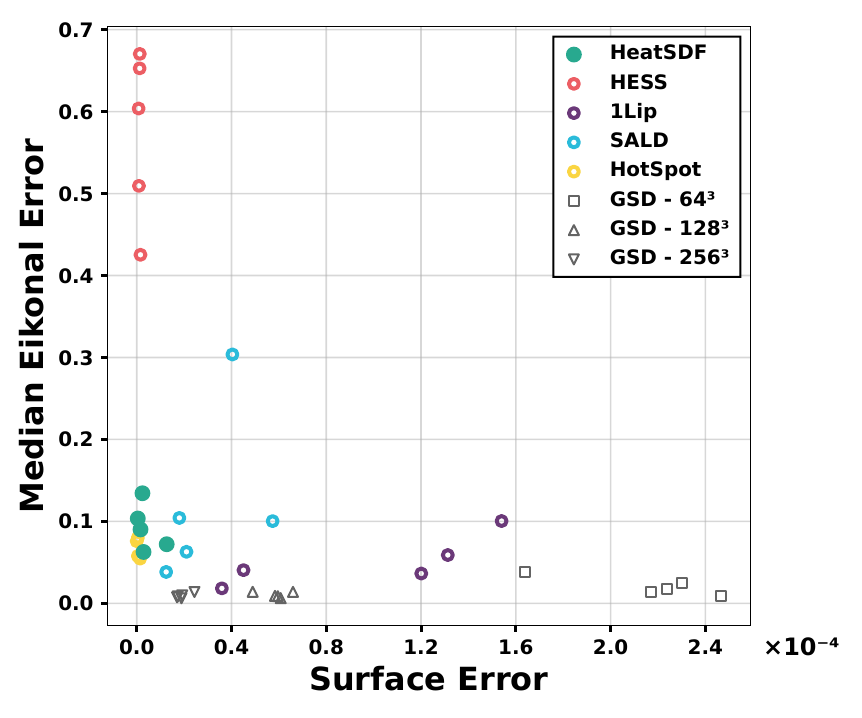}
	
	\vspace{0.0em}
	
	\makebox[0pt][c]{\hspace{-7pt}\includegraphics[scale=0.5]{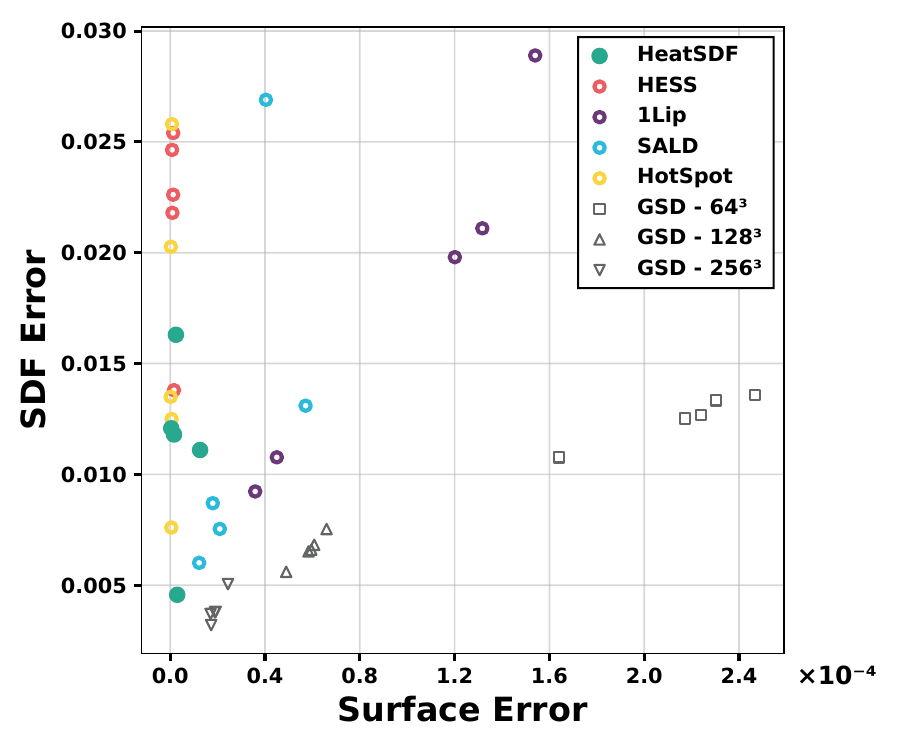}}
	\caption{Quantitative evaluation for different methods applied to various models (\cf \Cref{sec:AppB}, \Cref{fig:OurZeros}), illustrating
		that our method effectively balances surface reconstruction and SDF approximation; top:
		eikonal error $\mathbf{E_{eik}}$ against surface error $\mathbf{E_{recon}^{\mathcal{S}}}$; bottom: 
		SDF error $\mathbf{E_{SDF}}$ against surface error $\mathbf{E_{recon}^{\mathcal{S}}}$.
		\label{fig:quantComp}}
\end{figure}

\Cref{fig:qualComp} shows a qualitative comparison between the zero-level sets extracted with Marching Cubes for the different methods. Moreover, we plot for points, sampled in a narrow band near the surface, the neural distances compared the ground truth signed distance to the mesh. While HESS, HotSpot, and our method all achieve an accurate shape reconstruction, our method leads to the most accurate SDF estimation. 
We also note that HESS has a very accurate SDF estimation only very close to the surface and $1$-Lip consistently underestimates the distance, as expected from these methods. Those observations are also reflected in the corresponding table.

Our quadrature for the surface integral \Cref{eq:weights} accounting for the input point cloud density allows us to compute accurate SDFs even for spatially varying point densities. To demonstrate this, we consider a capped torus shape that can be described as the zero-level of a closed-form SDF \cite{iquilez2025sdf}. We consider a dense, a sparse, and a non-uniform point cloud as well as an example with noise added to the surface; see \Cref{tab:beanexperiment} for a visualization of the point clouds and quantitative values comparing the uniform and non-uniform case. All methods can deal with noisy and sparse input data. We provide the quantitative evaluation in the appendix \Cref{tab:appQuantTorus}. However, HotSpot and SALD have difficulties dealing with the highly non-uniform point cloud, mixing inside and outside of the shape. In contrast, our SDF error only changes by a magnitude of $10^{-4}$. In \Cref{fig:bean}, we show scatter plots for the SDF evaluation on the non-uniform capped torus.
\begin{table}
	\centering
	\footnotesize
	\includegraphics[scale=0.036]{figureTable2.pdf}
	\begin{tabular}{cccccc}
		\toprule
		&\textbf{Method} &$\mathbf{E_{recon}^{\mathcal{S}}}$ & $\mathbf{E_{eik}}$ &$\mathbf{E_{recon}^n}$ & $\mathbf{E_{SDF}}$\\
		\midrule
		
		\multirow{5}{*}{\rotatebox{90}{\tiny\textbf{Uniform}}}
		&SALD&  $5.27\cdot10^{-5}$ & 0.07901  & 0.02328&0.00929\\
		& HESS&\(2.19\cdot10^{-7}\)& 0.73540  & \textbf{0.00004}&0.01866\\  
		& 1-Lip & \(1.32\cdot10^{-5}\) & \textbf{ 0.00684 } & 0.00131&0.00339\\
		&HotSpot& \(\mathbf{1.72\cdot10^{-7}}\)&  0.03958  & 0.00007 & 0.00699\\ 
		&Ours & \(3.85\cdot10^{-7}\)&    0.03529  & 0.00079 &\textbf{0.00210}\\  
		\midrule \multirow{5}{*}{\rotatebox{90}{\tiny\textbf{Non-Uniform\hspace{0.05cm}}}}
		&SALD& \(3.62\cdot10^{-4}\) &  0.26120 & 0.08206&0.02397\\ 
		& HESS& \(\mathbf{2.21\cdot10^{-7}}\)&  0.73540 &  \textbf{0.00004}&0.01866\\  
		&  1-Lip  & \(2.85\cdot10^{-5}\)&  \textbf{0.00491} & 0.00191&0.00447\\  
		& HotSpot & \(2.16\cdot10^{-6}\)&   0.03669 &  0.00044 & 0.01322\\  
		& Ours  & \(7.89\cdot10^{-7}\)&    0.03682 &  0.00105 &\textbf{0.00221}\\  
		\bottomrule
	\end{tabular}
	\caption{Different point cloud sampling variants for the same capped torus shape with a given ground truth SDF (top), and a quantitative comparison of  uniform and  non-uniform sampling (bottom). Our locally adaptive weighting scheme enables  accurate SDF approximation even in the presence of non-uniform sampling density. }
	\label{tab:beanexperiment}
\end{table}

\begin{figure*}[ht]
	\begin{minipage}{0.5\textwidth}
		\includegraphics[scale=0.074]{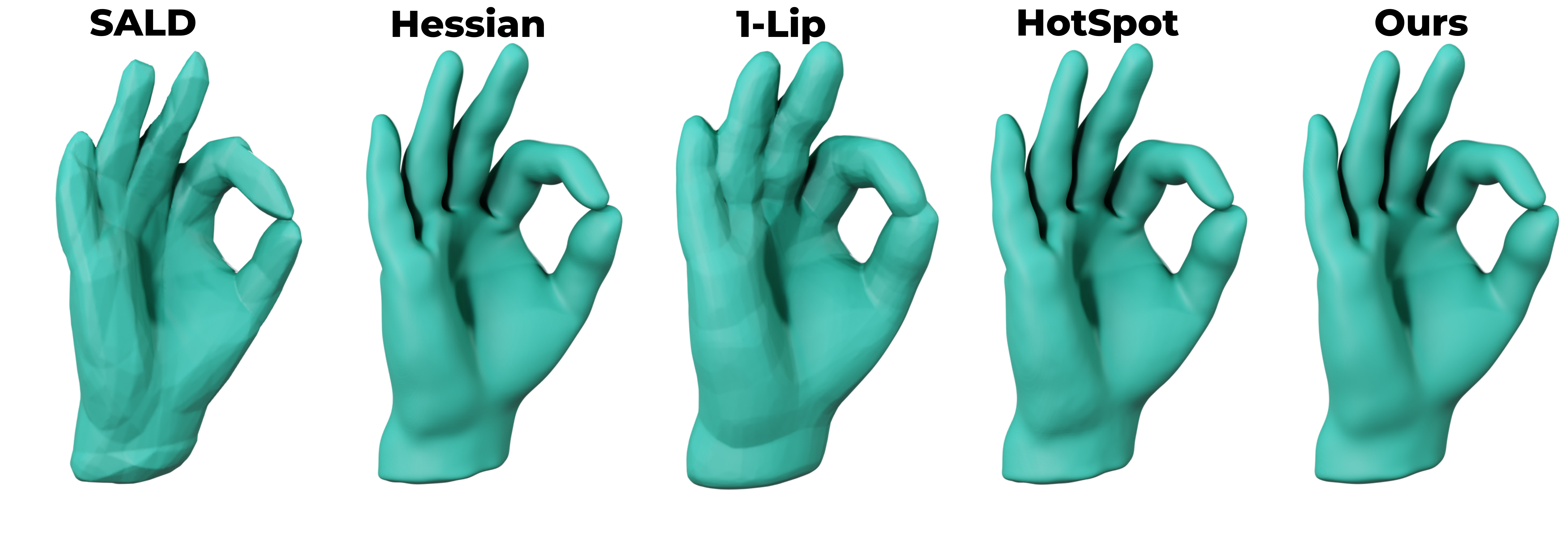}
	\end{minipage}
	\begin{minipage}[t]{0.42\textwidth}
		\footnotesize
		\begin{tabular}{ccccc}
			\toprule
			\textbf{Method} &$\mathbf{E_{recon}^{\mathcal{S}}} $ & $\mathbf{E_{eik}}$ &$\mathbf{E_{recon}^n}$ & $\mathbf{E_{SDF}}$\\ 
			\midrule
			SALD & \(5.74\cdot 10^{-5}\)&  0.1003 & 0.10630 & 0.01308\\ 
			HESS & \(1.27\cdot 10^{-6}\)  &  0.6704 &\textbf{0.00134} &0.02547   \\ 
			1-Lip   & \(1.20\cdot 10^{-4}\)   & \textbf{0.0363} &0.03793&0.01981\\  
			HotSpot  &\(\mathbf{7.49\cdot 10^{-7}}\) &   0.0651 & 0.00152 &  0.02578\\  
			Ours & \(1.59\cdot 10^{-6}\)  & 0.0901 &  0.00262&\textbf{0.01184}\\  %
			\hline
			\bottomrule
		\end{tabular}
	\end{minipage}
	\centering
	\includegraphics[scale=0.32]{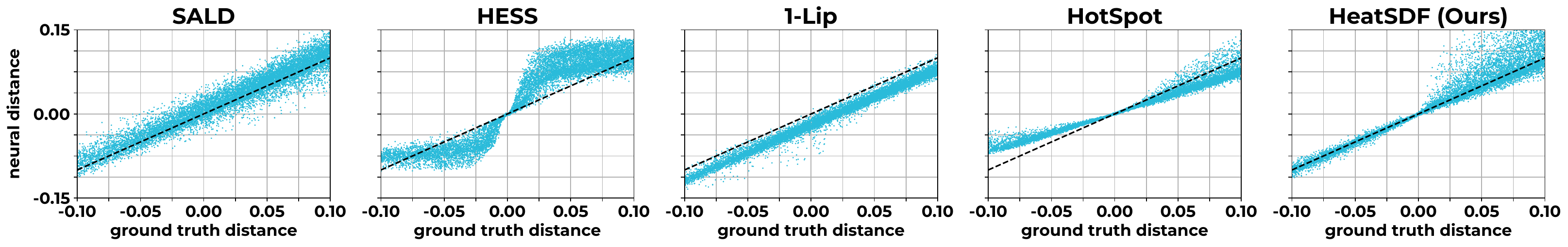}
	\caption{Results for the hand model from 6k-point cloud.  The zero-level set of HESS, HotSpot, and
		HeatSDF (present results) are qualitatively hard to distinguish. However, in the error
		table our model shows a favorable SDF. Furthermore, scatter plots show the different characteristics of the error profiles:
		high consistency close to the surface but larger and spread deviation farther away (HESS), globally good consistency with a tendency to underestimate the distance (1-Lip), good consistency with a slight difference in the slope (HotSpot), and overall consistency for our method.
		\label{fig:qualComp}}
\end{figure*}

\begin{figure*}[ht]
	\centering
	\includegraphics[scale=0.32]{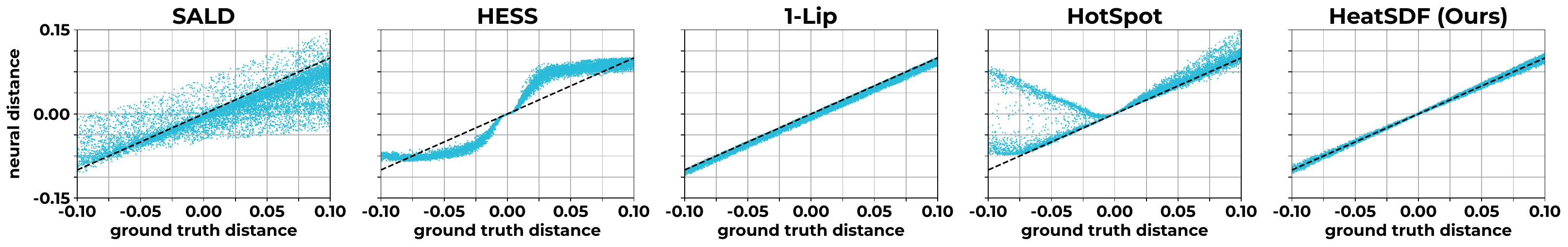}
	\caption{SDF error scatter plots for the non-uniformly sampled capped torus from \Cref{tab:beanexperiment}. Only 1-Lip and our method show a globally high consistency of the SDF. All other methods
		significantly deviate from the ground truth in the narrow band.}
	\label{fig:bean}
\end{figure*}

\subsubsection{Comparison to Grid-based Methods}
\label{sec:grid_comparison}
\begin{figure*}[ht]
	\centering
	\begin{minipage}[c]{0.5\textwidth}
		
		\includegraphics[scale=0.35]{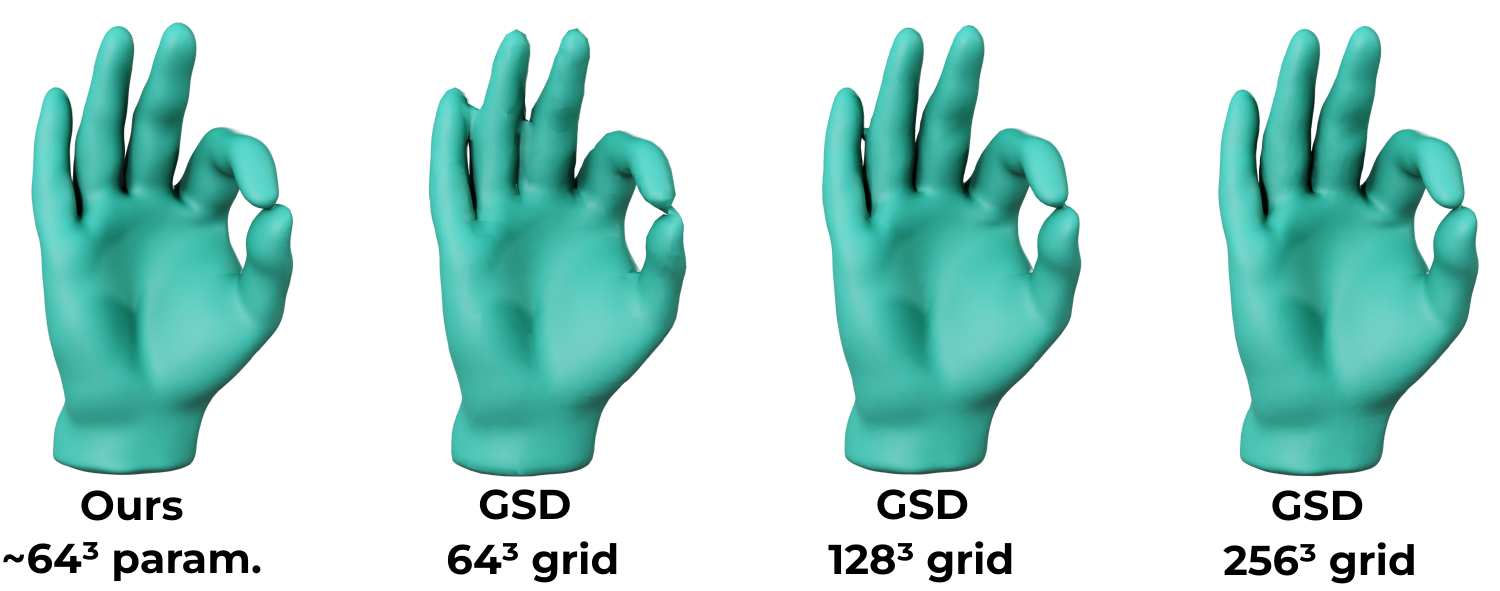}
		
	\end{minipage}\hfill
	\begin{minipage}[t]{0.5\textwidth}
		
		\centering
		\begin{tabular}{cccccc}
			\toprule
			&\textbf{DoF} & $\mathbf{E_{recon}^{\mathcal{S}}}$ & $\mathbf{E_{eik}}$ & $\mathbf{E_{recon}^n}$ & $\mathbf{E_{SDF}}$ \\
			\midrule
			\multirow{3}{*}{\rotatebox{90}{GSD}}& $64^3$ & $2.24\cdot 10^{-4}$ & 0.0174 & 0.04005 & 0.01268 \\
			& $128^3$ & $5.95\cdot 10^{-5}$ & 0.0087 & 0.01289 & 0.00661 \\
			&$256^3$ & $1.88\cdot 10^{-5}$ & \textbf{0.0072} & 0.00497 & \textbf{0.00371} \\
			\midrule
			Ours& $\sim64^3$ & $\mathbf{1.59\cdot 10^{-6}}$ & 0.0901 & \textbf{0.00262} & 0.01184 \\
			\bottomrule
		\end{tabular}
	\end{minipage}
	\caption{
		{Results for the same hand model as in \Cref{fig:qualComp}. 
			Our neural method outperforms the GSD approach~\cite{feng2024heat} in terms of surface $L^2$ error (${E_{recon}^{\mathcal{S}}}$) and surface normal alignment (${E_{recon}^n}$) across all resolutions. 
			Moreover, for a comparable number of degrees of freedom, our method achieves a similar accuracy with respect to the distance error (${E_{SDF}}$).    \label{fig:gridComp}}}
\end{figure*}

\begin{figure}[h]
	\centering
	\includegraphics[width=\columnwidth]{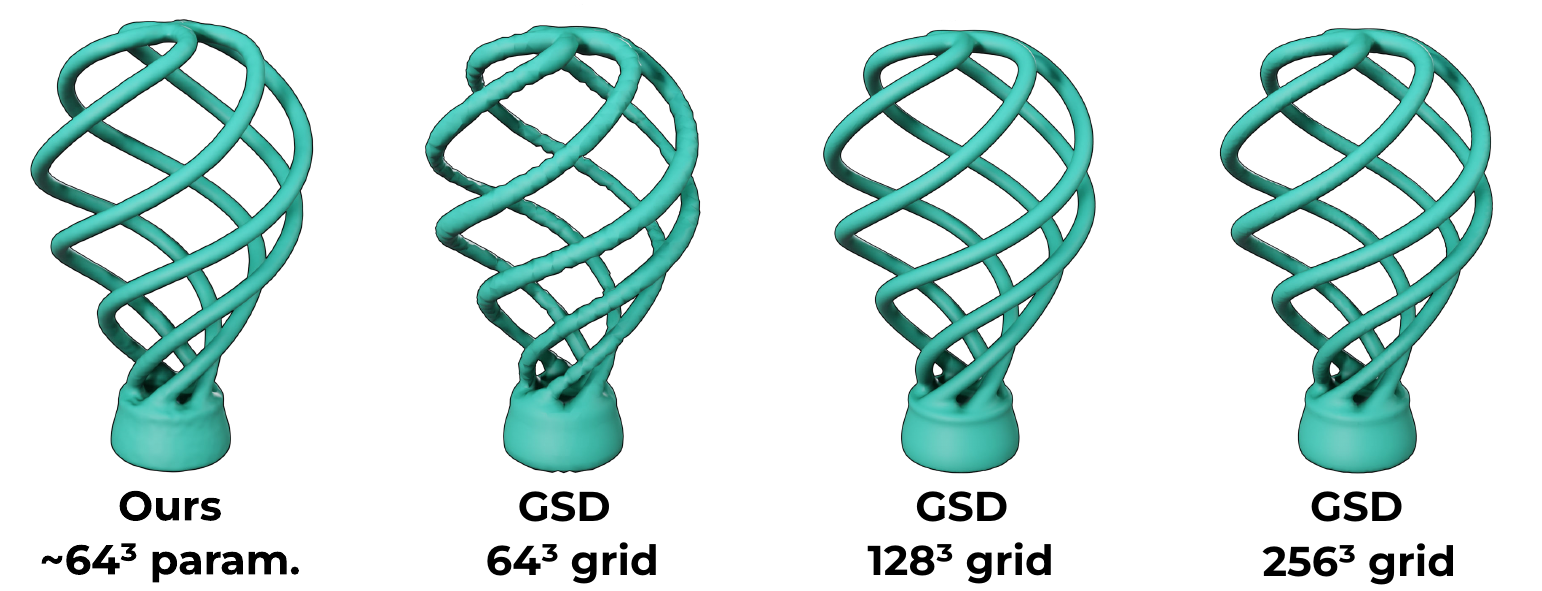}
	
	\caption{Comparison of our method with GSD on a lightbulb shape. Results are shown for GSD with approximately the same number of degrees of freedom (second from left) and for higher resolutions (two right columns), compared to our result (left).
		\label{fig:FC24_light}}
\end{figure}

In addition to the comparisons with state-of-the-art neural approaches presented in \Cref{sec:Comp}, 
we also compare our method to the closest non-neural, heat-based approach \emph{Generalized Signed Distances} (GSD) \cite{feng2024heat}. 
In GSD, the first step of the classical heat method is modified, such that the input geometry's normal vectors are diffused using a short-time heat flow. 
The second step, namely normalizing those diffused vectors and integrating the resulting signed normal field to recover the signed distance function, remains unchanged. 
Unlike \cite{feng2024heat}, we do not diffuse the normal field. 
Instead, we modify the second step of the heat method to obtain signed distances from an unsigned normal field.
Nevertheless, both methods share the use of the heat flow to recover a continuous approximation of the normals of the distance function.

\Cref{fig:gridComp} shows a qualitative comparison between the zero-level sets for our neural method and that for GSD at three different grid resolutions, along with a quantitative evaluation. 
The grid-based GSD method with approximately the same number of degrees of freedom as our neural network exhibits notable artifacts in regions of near self-contact (e.g., merged fingers). Additionally, it struggles in regions of high curvature (e.g., the thin wires of the lightbulb shape), as illustrated in \Cref{fig:FC24_light}.
Achieving a reconstruction quality comparable to ours requires a significantly higher grid resolution, \ie more degrees of freedom. 
This is also reflected by the quantitative results in \Cref{fig:quantComp}.
Our method requires on average approximately $25$ minutes to train, whereas the grid-based method takes, on average across the presented models, $1.5$ minutes ($64^3$), $10$ minutes ($128^3$), and $75$ minutes ($256^3$) to compute a solution. 
For these comparisons, we used the open source code provided by the authors of \cite{feng2024heat} with some straightforward parallelizations.
In both cases, runtimes heavily depend on the size of the input point cloud.

\subsection{Application: Neural PDE Heat Flow on Neural Surfaces}
In this section, we demonstrate that our learned SDF is sufficiently accurate to solve a PDE directly on the neural field using a level set method.
Following~\cite{bertalmio2001variational,dziuk2008eulerian}, we start by deriving a level set method for the geometric heat equation and formulate it as a minimizing movement scheme to make it accessible for a neural network-based approach (cf.~\cite{park2023deep}). 

Let $\phi$ be the SDF of a surface $\surface$ and consider the set of parallel surfaces
$\surface_c \coloneqq \{ x \in \domain \mid \phi(x) = c \}$ 
for different values $c$ of the level set function,  with $\surface = \surface_0$.
Then $n(x)=\nabla \phi(x)$ is the normal at the point $x$ on $\surface_{\phi(x)}$ 
and  $P(x) = \text{Id} - n(x) \otimes n(x)$  the
projection onto the tangent space $T_x \surface_{\phi(x)}$.
For the tangential gradient of a function $w$ one obtains $\nabla_{\surface_{\phi(x)}} w(x) = (P \nabla w)(x)$.  

A central tool in level set calculus is  the coarea formula for the integration of a function $f$, which reads as
$
\int_\R  \int_{\surface_c} f(x) \d a \d c = \int_{\domain} f(x) \d x
$
for an SDF representation of the level sets.
With the coarea formula and the tangential gradient at hand one obtains for the weak formulation 
of the geometric heat equation $\partial_t w - \Delta w = 0$ with square integrable initial data $w(0,\cdot) = w^0$
on the level sets $\surface_c$:
\begin{align}
	0=\int_{\domain} \left(\partial_t w\,\vartheta   + P \nabla w \cdot \nabla \vartheta \right)  \d x
\end{align}
for all smooth test functions $\vartheta \colon \R^3 \to \R$ and all $t>0$.
Using integration by parts in the second term, we get the strong form
$\Delta_{\surface} w = \div (\nabla w \!-\! (n\cdot \nabla w) n)$ of the
Laplace-Beltrami operator on $\surface$.  

The heat equation can be discretized in time by a 
minimizing movement formulation of a fully implicit Euler scheme~\cite{park2023deep} for the geometric heat equation, 
where for $k=0,\ldots$ one iteratively defines $w^{k+1}$ as the minimizer 
of the functional
\begin{align}
	\mmheat[w]&\coloneqq \int_{\domain} (w-w^k)^2  + \tau \left\vert P \nabla w  \right\vert^2 \d x \nonumber\\
	&=\int_{\domain} (w-w^k)^2  + \tau \left( \vert \nabla w \vert^2 - \vert \nabla \phi \cdot \nabla w\vert^2 \right) \d x
	\label{eq:narrowband}
\end{align}
with $w^{k+1}$ approximating $w((k+1)\tau,\cdot)$ for the time step size $\tau$ (cf. \eqref{eq:MMsimple} for the Euclidean case).

The minimization problem in \eqref{eq:narrowband} decouples over all level sets $\surface_c$.
This is only guaranteed for this continuous formulation. A numerical approximation
comes with only an approximate decoupling. In practice, we confine to a narrow band using a smooth blending function 
$\mu_\sigma$ with $\mu_\sigma(s)= 1$ on $[-\tfrac\sigma2,\tfrac\sigma2]$ and vanishes outside $(-\sigma,\sigma)$, where in our application $\sigma=0.05$.

Altogether, a time-discrete, fully implicit Euler scheme can be formulated as a variational problem and thus
solved numerically using a neural network approach. \Cref{fig:heat_on_surface} (bottom) shows the results of neural heat diffusion on the neural zero-level set of the Max Planck head, in comparison to a
piecewise affine, continuous finite element method on the original triangulation (top) from which the point cloud was extracted. Additionally, we present results of the finite element heat flow computation on the mesh extracted from the neural zero-level set (middle). We observe that the three approaches lead to very similar results qualitatively, indicating that our neural-PDE-on-neural-surface framework yields valid results \emph{without} requiring to extract the mesh of the neural level set.

\begin{figure}[!ht]
	\centering
	\includegraphics[width=\linewidth]{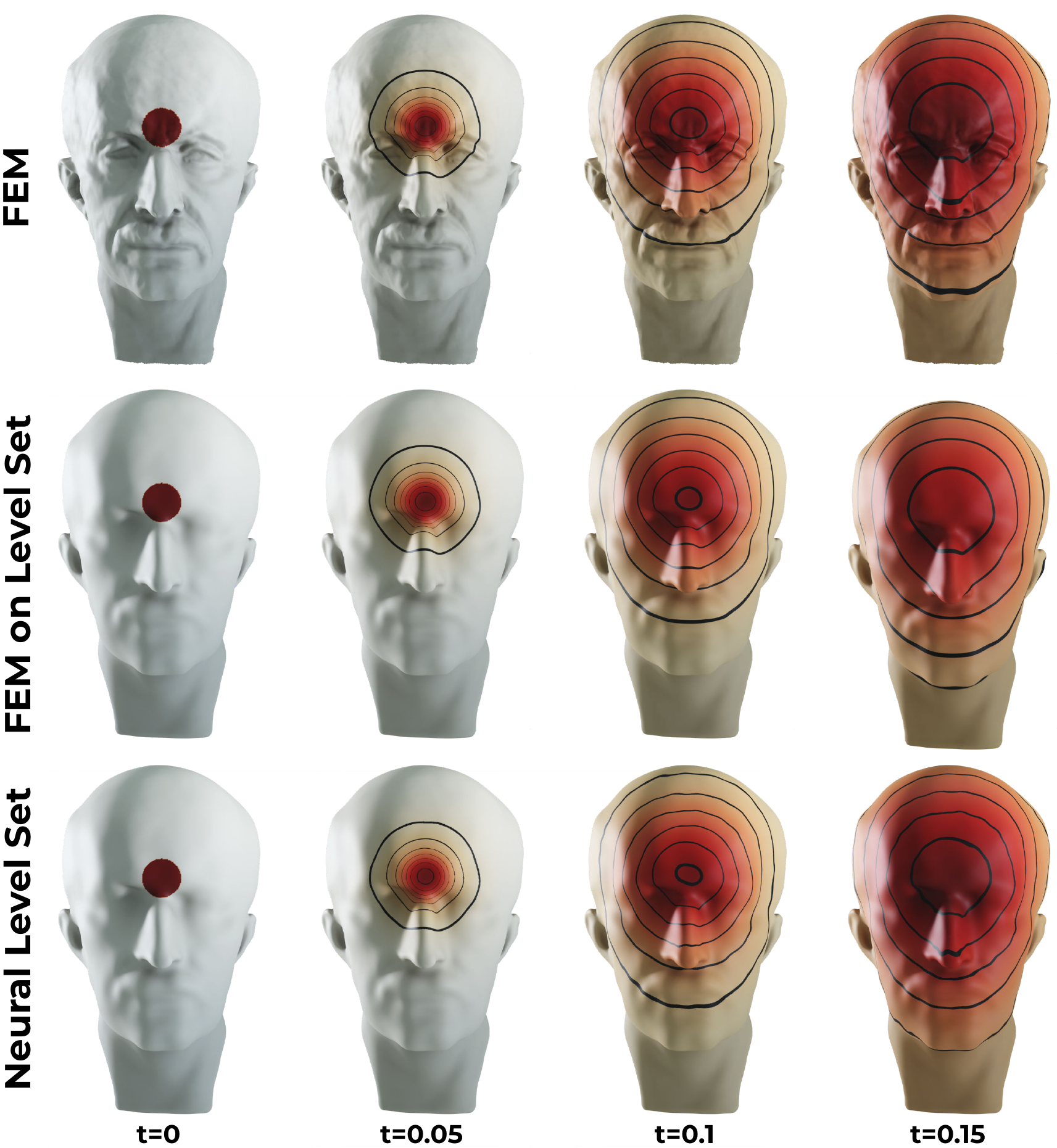}
	\caption{Level set curves for different time steps of the heat diffusion with a characteristic function of a ball intersecting the surface as initial data: 
		neural heat flow on neural zero-level set (bottom), finite element backward Euler scheme on a triangulation of the input point cloud (top) and on the regularized marching cube triangulation of the neural zero-level set (middle). We note that the neural heat flow on the neural surface is qualitatively similar to the FEM solutions.}
	\label{fig:heat_on_surface}
\end{figure}

\subsection{Application: Geometric Queries}
\begin{figure}
	\centering
	\includegraphics[width=\linewidth]{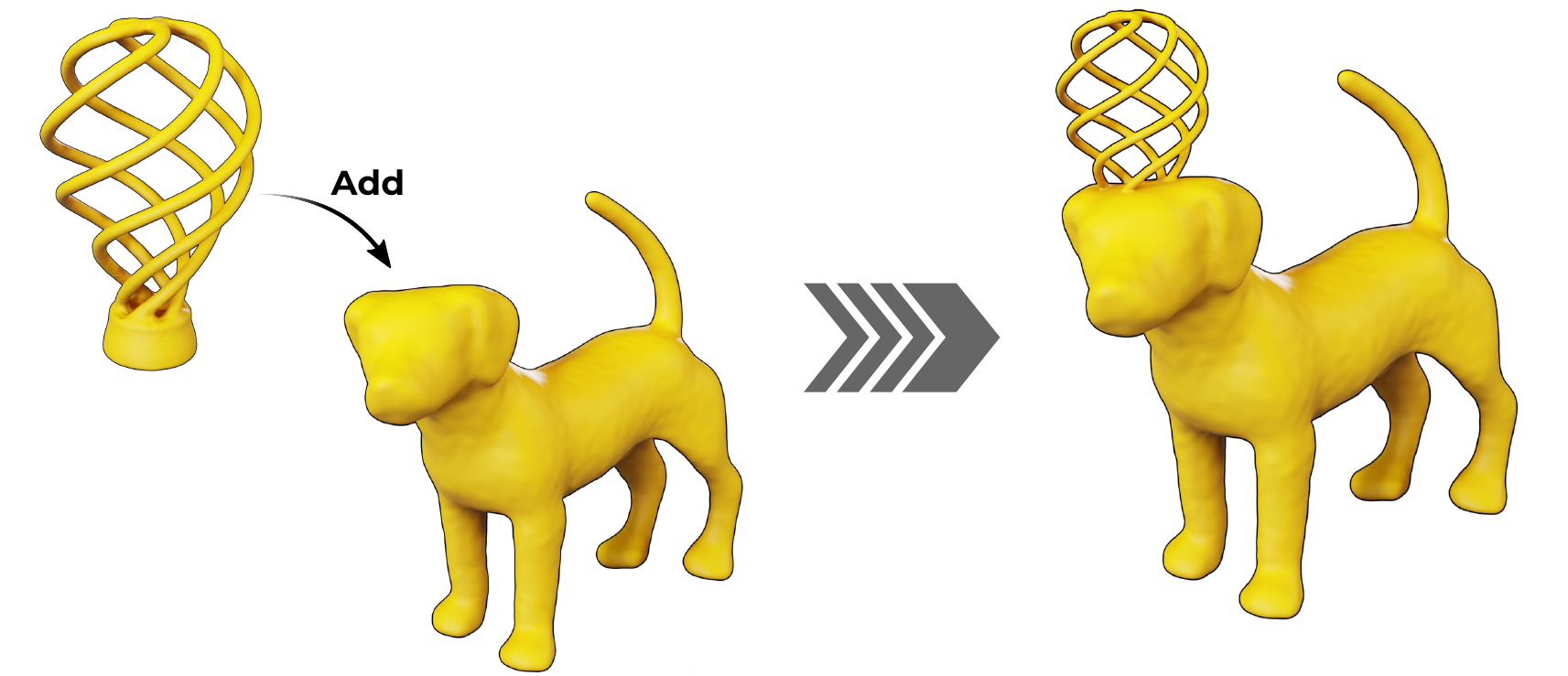}
	\caption{The union of the light bulb and the dog volume can be obtained as the sublevel set of the pointwise minimum of the corresponding SDFs.
	}
	\label{fig:union_lightbulb_dog}
\end{figure}
We demonstrate that our approach is accurate enough to compute the union or intersection of the sublevel sets $[\phi<0]$  for given SDFs. 
For two sublevel sets $[\phi_1<0]$ and $[\phi_2<0]$ we have
\begin{align*}
	[\phi_1<0] \cup [\phi_2<0] &= \left[\min \{ \phi_1,\phi_2\} <0\right] ,\\
	[\phi_1<0] \cap [\phi_2<0] &= \left[\max \{ \phi_1,\phi_2\} <0\right].
\end{align*}
\Cref{fig:union_lightbulb_dog} shows an example result.  
Note that in general, neither $\min \{ \phi_1,\phi_2\}$ nor $\max \{ \phi_1,\phi_2\}$ is an SDF \cite{marschner2023constructive}.
\section{Limitations}
Our method has a few limitations. 
To ensure that we solve only first-order variational problems that are convex
with respect to the function gradients, we train two separate networks, one for the heat diffusion time step $u^\tau$, and one for the SDF $\phi$.
We also observe a moderate loss of detail, which is a common drawback of a neural representations. Additionally, singularities such as crease lines lead to locally larger errors in the SDF. The proposed method for computing the sets \(\B^\pm\) does not generalize to shapes with interior voids or disconnected internal structures, albeit such cases are relatively uncommon. For these challenging cases, our method can be extended by using Dipole Normal Propagation (DNP) \cite{metzer2021orienting} for point cloud orientation and then generalized winding numbers \cite{jacobson2013robust} to determine \(\B^\pm\). In \Cref{fig:armadillo_with_holes}, we show a failure case of our box algorithm, while the extended pipeline succeeds---at the cost of increased computation time.
\section{Conclusion and Future Work}
\begin{figure}
	\centering
	\includegraphics[width=\linewidth]{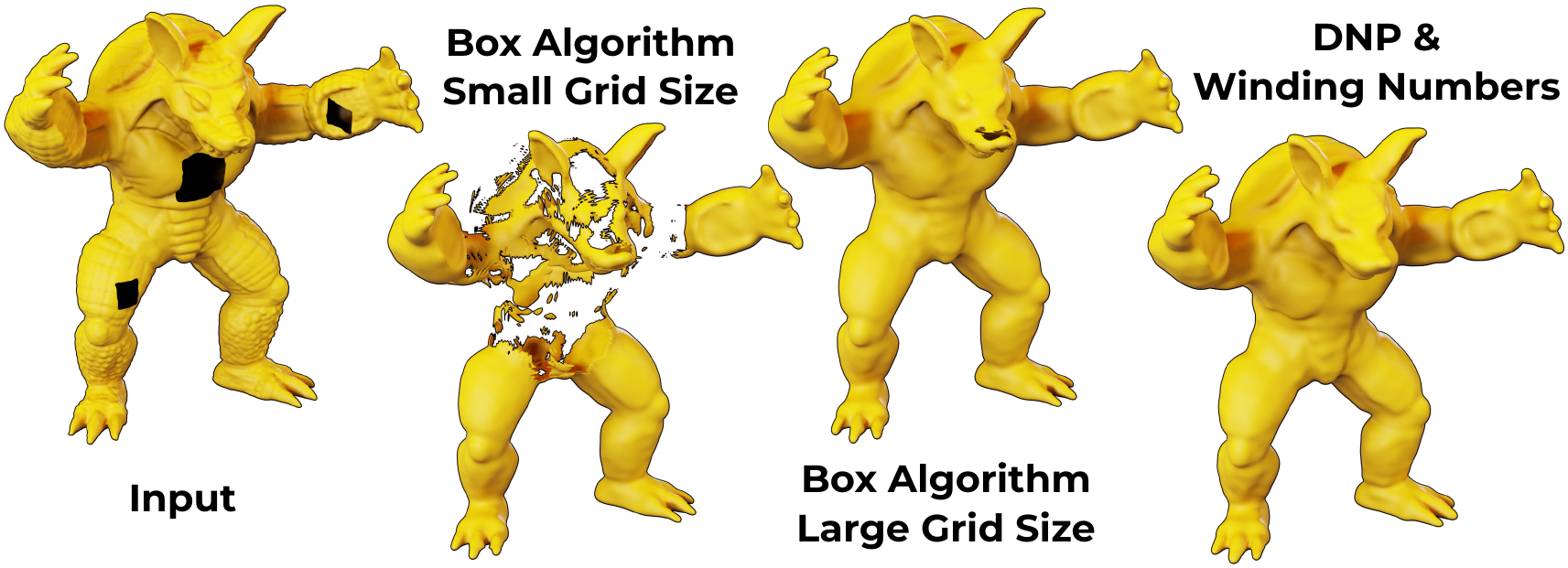}
	\caption{Input point cloud with three holes of varying sizes rendered as a mesh (left) and the extracted zero-level sets using our method with the simple box algorithm with standard grid size (2nd from left), using a four times larger grid size (3rd from left), and using DNP \cite{metzer2021orienting} and generalized winding numbers \cite{jacobson2013robust} to compute the inside/outside separation. The standard grid size results in an unsigned distance function in the corpus region, as it is smaller than the large hole. The larger grid size causes incorrect orientation in small regions, such as the nose, due to a lack of inside points.
	}
	\label{fig:armadillo_with_holes}
\end{figure}
We presented a novel method for computing a 
signed distance function (SDF) of  
a surface represented by an unoriented point cloud. 
Typically, the implicit representation of a surface as an SDF is governed by the eikonal equation,
a transport-type PDE. In the context of neural networks,
this equation is often approached through a variational formulation.
However, the commonly used eikonal loss functional is inherently non-convex, which can hinder convergence.

To address this, our method introduces a
two-step scheme based on two well-posed variational problems, 
offering a more robust and theoretically sound alternative.
First, a time step of the heat flow is computed using an approximate 
area measure as initial data. Second, an $L^2$ fitting of the SDF gradient is performed, using a properly reoriented and normalized gradient derived from the first step.
This two-stage process results in a robust and effective method for computing SDFs. In experiments, it outperforms 
state-of-the-art methods in terms of the error in the signed distance function, making it particularly well-suited as a foundation for level set methods used to solve PDEs on surfaces. 

Future research directions  include the development of narrow band adapted sampling strategies to reduce training times,
optimization of the network architecture, and local resolution enhancement of the SDF near crease lines.
Additionally, extending the method to handle more complex surface PDEs, such as thin film flow or shell deformations, remains a challenging and promising avenue.

\section*{Acknowledgments}
This work was supported by the Deutsche Forschungsgemeinschaft (DFG, German Research Foundation) via project 211504053 -- Collaborative Research Center 1060, 
project  539309657 -- Collaborative Research Center 1720, 
and via Germany’s Excellence Strategy project 390685813 -- Hausdorff Center for Mathematics.
Furthermore, this project has received funding from the European Union’s Horizon 2020 research and innovation program under the Marie Skłodowska-Curie grant agreement No 101034255, as well as the Israel Science Foundation (grant No. 1073/21).
\bibliographystyle{eg-alpha-doi} 
\bibliography{heatSDF.bib}

\appendix\section{Proofs}
\label{appproofs}

Here we give the proofs of \Cref{prop:ex} and \Cref{thm:sdl}.\smallskip

\begin{proof} [\Cref{prop:ex}]
	By the trace theorem
	$u\mapsto \fint_S u \d a$
	is a bounded linear functional on $H^{1}(\domain)$. The quadratic form
	$u\mapsto \int_{\domain} u^2 + \tau \vert \nabla u  \vert^2  \d x$ is $H^{1}(\domain)$-coercive and bounded.
	Thus, by the Lax-Milgram theorem, there is a unique weak solution $u^\tau$ of the Euler-Lagrange equation
	$0 = \partial_u \mmlossmod(u)$ \eqref{eq:MMEL} and this solution is the minimizer of the energy $\mmloss$ \eqref{eq:minimizing_movements_surface}.
\end{proof}

The following lemma will be used in the proof of \Cref{thm:sdl}.

\begin{lemma}\label{lemma:weakcont}
	For $n\in L^2(\domain;\R^3)$, $\phi\in H^1(\domain)$ define $A(\phi,n)\coloneqq\int_\domain \nabla |\phi|\cdot n\d x$. For any fixed $n$, the map $A(\cdot,n)$ is weakly-$H^1$ continuous.
\end{lemma}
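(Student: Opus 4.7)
The plan is to combine Stampacchia's chain rule, integration by parts against smooth test fields, Rellich's compact embedding, and a density argument. The key a priori ingredient is that $|\cdot|$ is $1$-Lipschitz, so if $\phi\in H^1(\domain)$ then $|\phi|\in H^1(\domain)$ with $\nabla|\phi|=\sgn(\phi)\nabla\phi$ a.e. (interpreting $\sgn(0)=0$). In particular
\[
\|\nabla|\phi|\|_{L^2(\domain)}\le \|\nabla\phi\|_{L^2(\domain)},
\]
so $A(\phi,n)$ is well defined and $|A(\phi,n)|\le \|\phi\|_{H^1}\|n\|_{L^2}$. This estimate is what will later let us truncate $n$.

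The main step is the smooth case. Suppose first that $n\in C_c^\infty(\domain;\R^3)$, and let $\phi_k\rightharpoonup \phi$ in $H^1(\domain)$. Integrating by parts (no boundary terms appear since $n$ is compactly supported in $\domain$),
\[
A(\phi_k,n)=\int_\domain \nabla|\phi_k|\cdot n\,\d x=-\int_\domain |\phi_k|\,\div n\,\d x.
\]
By Rellich--Kondrachov the embedding $H^1(\domain)\hookrightarrow L^2(\domain)$ is compact, so $\phi_k\to\phi$ strongly in $L^2(\domain)$, and since $|\cdot|$ is $1$-Lipschitz, $|\phi_k|\to|\phi|$ in $L^2(\domain)$. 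Hence $A(\phi_k,n)\to -\int_\domain|\phi|\,\div n\,\d x=A(\phi,n)$. This circumvents the delicate issue of pointwise convergence of $\sgn(\phi_k)$ on the set $\{\phi=0\}$, because the chain rule has been absorbed into an honest $L^2$ convergence of $|\phi_k|$.

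For arbitrary $n\in L^2(\domain;\R^3)$, fix $\varepsilon>0$ and choose $n_\varepsilon\in C_c^\infty(\domain;\R^3)$ with $\|n-n_\varepsilon\|_{L^2}<\varepsilon$, which is possible by density. A weakly convergent sequence is bounded, so $M\coloneqq \sup_k\|\phi_k\|_{H^1}<\infty$; together with $\|\nabla|\psi|\|_{L^2}\le\|\psi\|_{H^1}$ this yields
\[
|A(\phi_k,n)-A(\phi,n)|
\le |A(\phi_k,n_\varepsilon)-A(\phi,n_\varepsilon)|
+(M+\|\phi\|_{H^1})\,\varepsilon.
\]
Letting $k\to\infty$ using the smooth case, then $\varepsilon\to 0$, yields the claim.

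The only genuine obstacle is the nonlinearity $\phi\mapsto|\phi|$: naively one would pass to the limit in $\sgn(\phi_k)\nabla\phi_k\cdot n$, but $\sgn(\phi_k)$ need not converge on $\{\phi=0\}$ and $\nabla\phi_k$ only converges weakly. Moving the derivative off $|\phi_k|$ onto the test field $n$ and then promoting to $L^2$ by density is precisely what removes this difficulty.
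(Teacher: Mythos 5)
Your proof is correct and follows essentially the same route as the paper's: integrate by parts to move the derivative off $|\phi_k|$ onto a smooth compactly supported test field, use Rellich's compact embedding to get strong $L^2$ convergence of $|\phi_k|$, and then handle general $n\in L^2$ by density together with the uniform bound $\|\nabla|\psi|\|_{L^2}\le\|\nabla\psi\|_{L^2}$ and the boundedness of weakly convergent sequences. The only (welcome) difference is that you make the $1$-Lipschitz chain-rule justification and the $L^2$ convergence of $|\phi_k|$ fully explicit, where the paper phrases the same step as $L^2$-continuity of $A(\cdot,n_k)$.
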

\begin{proof}
	Let $\phi_j$ converge weakly to $\phi$ in $H^1(\domain)$
	and consider a sequence $n_k\in C^\infty_c(\domain;\R^3)$ which converges to $n$ strongly in $L^2(\domain;\R^3)$.
	Integration by parts shows that for any $k$ and any $\psi\in H^1(\domain)$ one has
	\begin{align*}
		A(\psi,n_k)=\int_\domain \nabla |\psi|\cdot n_k \d x =
		-\int_\domain |\psi|\div n_k \d x.
	\end{align*}
	Since $\div n_k\in L^2(\domain)$, this implies that $A(\cdot, n_k) \colon L^2(\domain) \to \R$ is continuous  
	and thus by Rellich's theorem 
	$\lim_{j\to\infty} A(\phi_j, n_k)=A(\phi,n_k)$ for any $k\in\N$. 
	Since weak convergence implies boundedness,
	there is $C>0$ such that for any $j$ and $k$
	\begin{align*}
		& |A(\phi_j, n)-A(\phi,n)| \\
		&\le |A(\phi_j, n_k)-A(\phi,n_k)|\\
		& \qquad +|A(\phi_j, n_k-n)|+|A(\phi,n_k-n)|\\
		&\le |A(\phi_j, n_k)-A(\phi,n_k)| \\
		& \qquad + (\|\nabla \phi_j\|_{L^2}
		+\|\nabla \phi\|_{L^2}) \|n_k-n\|_{L^2}\\
		&\le |A(\phi_j, n_k)-A(\phi,n_k)| + C \|n_k-n\|_{L^2}.
	\end{align*}
	Therefore, $\limsup_{j\to\infty} |A(\phi_j, n)-A(\phi,n)|\le C \|n_k-n\|_{L^2}$
	for any $k\in\N$,  and taking $k\to\infty$ we conclude
	$|A(\phi_j, n)-A(\phi,n)|\to0$.
\end{proof}

\begin{proof} [\Cref{thm:sdl}]
	Using $\sgn \phi \, \nabla \phi = \nabla \vert \phi\vert$
	we rewrite $\normalloss$ as
	\begin{align*}
		\normalloss[\phi]  &= \int_\domain\vert \sgn \phi \, \nabla \phi  - n^\tau |^2\d x
		\\
		&= \int_\domain |\nabla \phi |^2 - 2 \sgn \phi \, \nabla \phi \cdot n^\tau + \vert n^\tau\vert^2 \d x \\
		&=  \int_\domain |\nabla \phi |^2 - 2 \nabla \vert \phi\vert \cdot n^\tau + 1 \d x  .
	\end{align*}
	The first term on the right-hand side is weakly lower semicontinuous on $H^{1}(\domain)$.
	By \Cref{lemma:weakcont}  the functional
	$\phi \mapsto A(\phi,n^\tau)$ is weakly continuous on $H^{1}(\domain)$.
	Therefore, $\normalloss$ is weakly lower semicontinuous.

	In turn, by the trace theorem
	for the surface $\surface$ one gets that  $\fitloss$ is weakly continuous on $H^{1}(\domain)$.
	
	Next, we consider $\orientloss$.
	Possibly passing to a subsequence, we can assume that $\phi_j\to\phi$ pointwise almost everywhere. 
	As the function $t\mapsto \chi_{(-\infty,0)}(t)$ is lower semicontinuous,
	for almost every $x$
	we have $\chi_{[\phi<0]}(x)\le \liminf_{j\to\infty} \chi_{[\phi_j<0]}(x)$. By Fatou's Lemma, we conclude that
	$ \int_{\B^-} \chi_{[\phi<0]}\d x\le \liminf_{j\to\infty} \int_{\B^-} \chi_{[\phi_j<0]}\d x.$
	The term with $\B^+$ is treated analogously.
	Altogether, the energy $\sdfloss$ is sequentially weakly lower semicontinuous on $H^{1}(\domain)$.
	
	Furthermore, using $a^2 \leq 2(a-b)^2 + 2 b^2$ with $a=\nabla \vert \phi\vert$ and $b=n^\tau$
	we obtain that
	\begin{align*}
		\Vert \nabla \phi \Vert_{L^2(\domain)}^2
		+ \lambda_{\mathrm{fit}}
		\Vert \phi \Vert^2_{L^2(\surface)}
		&\leq 2 \sdfloss[\phi] + 2 \vert \domain \vert .
	\end{align*}
	Taking into account that $\Vert \nabla \phi \Vert_{L^2(\domain)} + \Vert \phi \Vert_{L^2(\surface)}$ is a norm equivalent to the $H^{1}(\domain)$ norm,
	we get that minimizing sequences are bounded in  $H^{1}(\domain)$. This boundedness together with the
	weak lower semicontinuity ensures the existence of a minimizer of the energy $\sdfloss$.
\end{proof}

\section{Additional Experimental Details} \label{sec:AppB}

Here, we provide some additional qualitative and quantitative evaluation and details on the used data.
In \Cref{tab:appQuantTorus} we provide quantitative evaluation for the noisy and sparse input point cloud of the capped torus shape from \Cref{fig:bean}.

In \Cref{fig:OurZeros}, we show Marching Cubes results for the zero-level sets of our neural SDF on the set of shapes used in the quantitative evaluation in \Cref{fig:quantComp}. 

In \Cref{fig:additionalShapes}, we show the zero-level set results for some additional shapes. They were all computed with the same parameter set that is mentioned in the main text. Notably, no parameter tuning is required to reconstruct thin structures like the birds claws.

Finally, \Cref{tab:meshes} gives an overview over all considered models and the respective point cloud sizes. The point cloud for the capped torus experiment was created by ourselves.

\begin{figure}
	\centering
	\includegraphics[width=\linewidth]{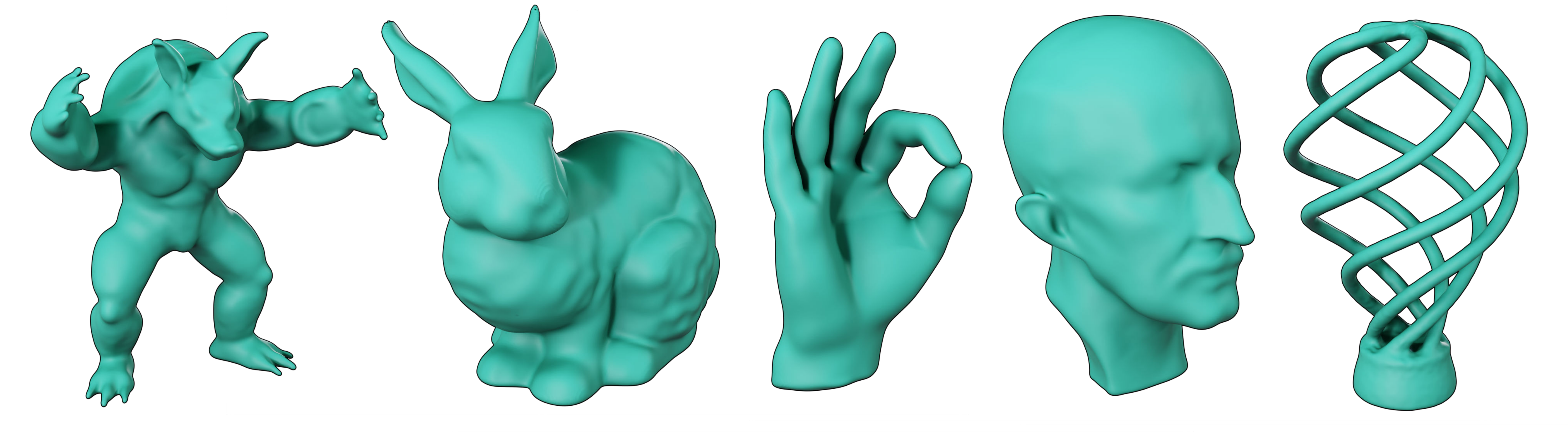}
	\caption{Marching Cubes result for zero-level sets of our neural SDF. Models used in the quantitative evaluation in \Cref{fig:quantComp}. \label{fig:OurZeros}}
\end{figure}
\begin{figure}
	\centering
	\includegraphics[width=\linewidth]{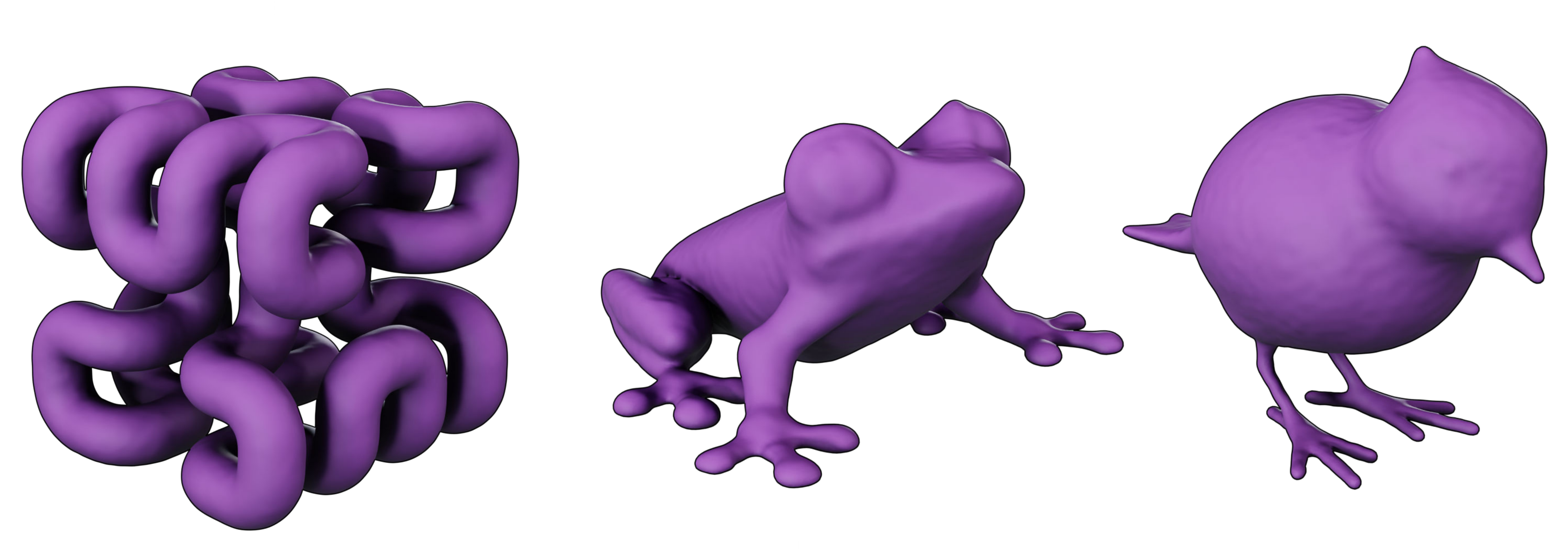}
	\caption{ Marching Cubes result for the zero-level sets of our neural SDF approximation.  \label{fig:additionalShapes}}
\end{figure}

\begin{center}
	\footnotesize
	\begin{tabular}{cccccc}
		\toprule
		&\textbf{Method} &$\mathbf{E_{recon}^{\mathcal{S}}}$ & $\mathbf{E_{eik}}$ &$\mathbf{E_{recon}^n}$ & $\mathbf{E_{SDF}}$\\
		\midrule
		\multirow{5}{*}{\rotatebox{90}{\textbf{Sparse }}}
		& SALD  &\(4.21\cdot 10^{-5}\) &   0.09983  & 0.00467&0.01214\\
		&Hessian& \(\mathbf{2.20\cdot 10^{-7}}\)& 0.73540 & \textbf{0.00004}&0.01866\\  
		&1Lip & \(1.06\cdot 10^{-4}\)&   \textbf{0.01046} &  0.00230&0.01043\\
		&HotSpot & \(5.10\cdot 10^{-7}\)&  0.04209 &  0.00012 &  0.00675\\
		&Ours & \(6.66\cdot 10^{-7}\) &   0.05366  &  0.00230 &\textbf{0.00240}\\ 
		\midrule
		\multirow{5}{*}{\rotatebox{90}{\textbf{Noise}}}
		& SALD & \(5.10\cdot 10^{-5}\)&  0.15230 &  0.02500&0.01054\\
		& Hessian& \(\mathbf{2.22\cdot 10^{-7}}\)&  0.73542 & \textbf{0.00004}&0.01867\\
		& 1Lip & \(2.13\cdot 10^{-5}\)&   \textbf{0.00775} & 0.00135&0.00426\\  
		& HotSpot& \(2.81\cdot 10^{-7}\)& 0.04043 &  0.00009 & 0.00596\\
		& Ours   & \(3.61\cdot 10^{-7}\)&  0.02514 & 0.00030 &\textbf{0.00210}\\   
		\bottomrule
	\end{tabular}
	\captionof{table}{Quantitative results for sparse and noisy input point clouds of the capped torus shape. \label{tab:appQuantTorus}}
\end{center}
\begin{table}[h]
	\centering
	\small
	\begin{tabular}{c c}
		\toprule
		\textbf{model} &point cloud size \\
		\midrule
		armadillo \cite{DBLP:conf/siggraph/KrishnamurthyL96} & 170k \\
		hand \cite{yeh2010template} & 6k\\
		Max Planck head \cite{Ivri2002divide} & 200k \\
		bunny \cite{DBLP:conf/siggraph/TurkL94} & 170k \\
		dog \cite{dyke2020shrec} & 100k \\
		lightbulb (id 39084) \cite{Thingi10k} & 500k\\
		bucky (id 41140) \cite{Thingi10k} & 100k\\
		pipes (id 53754) \cite{Thingi10k} & 100k\\
		bird (id 178340) \cite{Thingi10k}  & 100k \\
		frog (id 90736) \cite{Thingi10k} & 100k\\
		\bottomrule
	\end{tabular}
	\caption{Considered models and point cloud sizes. \label{tab:meshes}}
\end{table}

\end{document}